\newtheorem{theorem}{Theorem}[section]
\newtheorem{lemma}{Lemma}[section]
\newtheorem{prop}{Proposition}[section]
\newtheorem{form}{Formulation}
\numberwithin{equation}{section}
\newcommand{\real}{\mathbb{R}}
\newcommand{\gz}{g_{\zeta}}
\def\eqn {\begin{equation}}
\def\eeqn {\end{equation}}
\def\C{{\mathbb C}}
\def\real{{\mathbb R}}
\def\lb{\lambda}
\def\pa{\partial}
\def\A{\mathcal A}
\def\C{\mathcal C}
\def\F{\mathcal F}
\def\K{\mathcal K}
\def\L{\mathcal L}
\def\O{\mathcal O}
\def\S{\mathcal S}
\def\ka{\kappa}
\begin{document}
\title{RAPIDLY ROTATING STARS}
\author{Walter A. Strauss} 
\address{Department of Mathematics, Brown University, Providence, RI 02912}
\author{Yilun Wu}
\address{Department of Mathematics, Brown University, Providence, RI 02912}
\date{}

\begin{abstract}
A rotating star may be modeled as a continuous system of particles attracted to each other byÊ
gravity and with a given total mass and prescribed angular velocity.   
Mathematically this leads to the Euler-Poisson system. 
We prove an existence theorem for such stars that are rapidly rotating,   
depending continuously on the speed of rotation.  
This solves  a problem that has been open since Lichtenstein's work in 1933.  
The key tool is global continuation theory, combined with a delicate limiting process.  
The solutions form a connected set $\K$ in an appropriate function space. 
As the speed of rotation increases, we prove that {\it either the supports of the stars  in $\K$ become unbounded 
or the density somewhere within the stars becomes unbounded}.  
We permit any equation of state of the form $p=\rho^\gamma,\ 6/5<\gamma<2$, so long as $\gamma\ne4/3$.  
We consider two formulations, one where the angular velocity is prescribed and the other where 
the angular momentum per unit mass is prescribed.  
\end{abstract}

\maketitle

\section{Introduction}

We consider a continuum of particles attracted to each other by gravity 
but subject to no other forces.  
Initially they are static and spherical but then they begin to rotate 
around a fixed axis after some perturbation and 
thereby flatten at the poles and expand at the equator.  This is a simple 
model of a rotating star or planet.  It can also model a rotating galaxy
with its billions of stars.  In this paper we permit fast rotations and look
for steady states of the resulting configuration.  To find a family of states 
with a given mass is a highly desirable property.  We find a connected set
of such states with constant mass. 

This is a very classical problem that goes back to 
MacLaurin, Jacobi, Poincar\'e, Liapunov et al.,  who
assumed the density of the rotating fluid to be homogeneous or almost homogeneous, which is 
of course physically unrealistic. 
 See Jardetzky \cite{jardetzky2013theories} for a nice account of the classical history of the problem. 
More realistic work for slow rotations was begun by Lichtenstein \cite{lichtenstein1933untersuchungen} beginning in 1918 
and by Heilig \cite{heilig1994lichtenstein}, who
approached the problem of slowly rotating stars by means of an implicit function theorem in
function space.   They made realistic assumptions on the density 
but the mass of their solutions changes as the body changes its speed of rotation. 
Recently Jang and Makino \cite{jang2017slowly}   
studied the problem of slowly rotating stars using a simpler implicit function approach  
in the case of the power law $p=C\rho^\gamma$  and constant rotation speed.  
However, as in Lichtenstein and Heilig's work, their perturbation also does not keep the total mass constant  
and their analysis is restricted to the range $\frac65 < \gamma < \frac32$.   
In \cite{strauss2017steady} we also constructed slowly rotating stars.  
We constructed solutions with a given constant mass and permitted a general equation of state 
and a general rotation speed (see Formulation 4 in Section 7).  

A different approach was begun in 1971 by Auchmuty and Beals \cite{auchmuty1971variational} 
using a variational method with a mass constraint.  
The main difficulty in this approach is to prove that the minimizing solution has compact support.  
Their approach was generalized and extended by many authors, 
including Auchmuty \cite{auchmuty1991global}, 
Caffarelli and Friedman \cite{caffarelli1980shape}, 
Friedman and Turkington \cite{friedman1981existence}, Li \cite{li1991uniformly}, 
Chanillo and Li \cite{chanillo1994diameters}, Luo and Smoller \cite{luo2009existence}, 
Wu \cite{wu2015rotating}, and Wu \cite{wu2016existence}. 
The variational method has the major advantages
that the rotation speed is allowed to be large and that the mass is constant.   
However, there is no control on the nature of the compact support of the star, 
it does not provide a {\it continuous} curve of solutions depending on the angular velocity,  
and the equation of state is restricted to powers satisfying $\gamma>\frac43$.  
This variational method is the only one that has previously been used to prove  
the existence of solutions that rotate rapidly. 

In the present paper we extend the implicit function approach to construct solutions 
that represent stars that rotate rapidly.  
We construct, for the first time, a {\it connected} set $\K$ of solutions that is {\it global}.  
Keeping the mass constant is a key to our methodology, 
so that there is no loss or gain of particles when the star changes its rotation speed.   
Furthermore, we permit (a) the full range $\frac65<\gamma<2, \gamma\ne \frac43$,  
(b) a non-uniform angular velocity, and (c) a general equation of state $p=p(\rho)$.  

Now we describe our method.  
We begin with the steady compressible Euler-Poisson equations (EP) 
for the density $\rho\ge0$, subject to the internal forces of gravity due to the particles themselves.  
The speed $\omega(r)$ of rotation around the $x_3$-axis is allowed to depend on $r=r(x)=\sqrt{x_1^2+x_2^2}$.  
The inertial forces are entirely due to the rotation. In the region $\{x\in\real^3\ \Big|\ \rho(x)>0\}$ 
occupied by the star,  EP reduces to the equation 
\eqn \label{NG} 
\frac 1{|x|}*\rho + \kappa^2\int_0^{r} s\omega^2(s)\,ds  
     -  h(\rho)     = constant,   \eeqn
where $\omega(r)$ is a given function, $\kappa$ is a constant measuring the intensity of rotation, 
 $h$ is the enthalpy defined by $h'(\rho)=\frac{p'(\rho)}{\rho}$ with $h(0)=0$,   
and $p$ is the pressure.  The constant of gravity is assumed to be 1.  
The density must vanish at the boundary of the star.  
See the end of this introduction for the derivation of \eqref{NG}.   

So far this approach is standard.  For simplicity in this introduction let us  consider the 
standard equation of state $p(\rho)=C\rho^\gamma$.  
As a first attempt we take the inverse of $h$ to reformulate the problem as 
\eqn   \label{REF}
 \rho(\cdot)  =
\left[ \frac{1}{ |\cdot |} * \rho(\cdot) + \kappa^2 \int_0^{r} s\omega^2(s)\,ds + \alpha \right]_+^{{1}/(\gamma-1)},  
\qquad \int_{\real^3} \rho(x)\ dx = M, \eeqn  
where $\alpha$ is the negative of the constant that appears in \eqref{NG}, $M$ is the given value of the mass,  
and $[z]_+=\max(z,0)$.  
This is reminiscent of the discussion of Auchmuty \cite{auchmuty1991global} 
and the method of Jang and Makino \cite{jang2017slowly}.  
Auchmuty \cite{auchmuty1991global} found rapidly rotating solutions that unfortunately do not satisfy the physical 
boundary conditions but instead may have large density at the boundary of the star.  
What is novel in our formulation is to force the total mass $M$ to be fixed  and 
to introduce the constant $\alpha$ as a variable.   
The case $\gamma=\frac43$ is excluded because in that case  the constant mass condition introduces 
a nullspace of the linearized operator.  
If the mass were allowed to vary, the nullspace would be trivial so that the implicit function theorem 
would be applicable and $4/3$ would be permitted.  
In Section 7 we compare our approach \eqref{REF} to several alternative mathematical approaches.

Nonetheless, even with this method there is still  no way to guarantee that $\rho$ has compact support 
because the expression inside $[\dots]_+$ could be positive for large $|x|$.  
 We get the support to be compact 
by artificially forcing the parameter $\alpha$ to be sufficiently negative (see Lemma \ref{lem: support bound}).  
Then we begin the construction of rotating star solutions in the standard way by continuation 
from a non-rotating solution ($\kappa= 0$).  
It is in this first step that we require $\frac65<\gamma<2,\ \gamma\ne\frac43$,  
and we refer to \cite{strauss2017steady} for some lemmas and details.  

Letting $\kappa$ increase, we continue the construction by applying the global implicit function theorem, 
which is based on the Leray-Schauder degree (see Lemma \ref {lem: 4.5}).    
Later on, in Theorem \ref{thm: ang vel main} we obtain the whole global connected 
set $\K$ of solutions by allowing $\alpha$ to increase.  
The most novel and intricate part of our proof occurs here.  
Our main result, stated somewhat informally, is as follows. 
See Theorem \ref{thm: ang vel main} below for a completely precise version.    

\begin{theorem}  
Let $M$ be the mass of the non-rotating solution.  
Assume the pressure $p(\cdot)$ and the angular velocity $\omega(\cdot)$ satisfy 
\eqref{cond: p 1}-\eqref{cond: p 3}, \eqref{eq: def mass}-\eqref{cond: mass distinct}, 
\eqref{cond: omega 1}-\eqref{cond: omega 2}.  
By a ``solution" of the problem, we mean a triple $(\rho, \kappa, \alpha)$, where  
$\rho$ is an axisymmetric function with mass $M$ that satisfies \eqref{NG} and 
$\kappa$ refers to the intensity of rotation speed. 
Then there exists a set $\K$ of solutions satisfying the following three properties. 
\begin{itemize}
\item $\K$ is a connected set in the function space $C_c^1(\real^3)\times \real\times\real$. 
\item $\K$ contains the non-rotating solution. 
\item either 
$$\sup \{ \rho(x)\ \Big| \  x\in\real^3, (\rho,\kappa,\alpha)\in\K \}=\infty$$

or 
$$\sup\{|x|\ \Big| \ \rho(x)>0, (\rho,\kappa,\alpha)\in\K \}=\infty.$$ 
\end{itemize}
The last statement means that 
 either the densities become pointwise unbounded or the supports become unbounded.  
 \end{theorem}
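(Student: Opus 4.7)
The plan is to recast \eqref{REF} as a fixed-point equation $\rho = T(\rho,\kappa,\alpha)$ on a suitable subset of $C_c^1(\real^3)$, where $T$ is the nonlinear operator given by the right-hand side of \eqref{REF}. Compactness of $T$ in the relevant topology comes from the smoothing of the Newtonian potential $\tfrac{1}{|x|}*\rho$, and is available once we know that the supports of candidate solutions $\rho$ lie in a common ball. This is exactly the role of Lemma \ref{lem: support bound}, which supplies such a bound whenever $\alpha$ is sufficiently negative. Once a support bound is in hand, the problem becomes a standard Leray--Schauder continuation on a bounded domain, and the zero set of $F(\rho,\kappa,\alpha) := \rho - T(\rho,\kappa,\alpha)$ in $C_c^1(\real^3)\times\real\times\real$ carries a well-defined global continuation structure.

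Starting from the non-rotating solution $(\rho_0,0,\alpha_0)$, I would apply Lemma \ref{lem: 4.5} (global implicit function theorem based on Leray--Schauder degree) with $\alpha$ held fixed and sufficiently negative, producing a connected branch $\K_0$ through $(\rho_0,0)$ in the $(\rho,\kappa)$-slice. I would then let $\alpha$ vary and assemble the various $\alpha$-slices into the connected component $\K$ of the zero set of $F$ through $(\rho_0,0,\alpha_0)$. Connectedness of $\K$ in $C_c^1(\real^3)\times\real\times\real$ follows because $T$ is continuous in this topology wherever a common support bound holds. The abstract output is the Rabinowitz-type alternative: $\K$ either escapes every bounded subset of $C_c^1(\real^3)\times\real\times\real$ or recurs to a second non-rotating solution. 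The analysis at $\kappa=0$ in \cite{strauss2017steady}, together with the compactness of $T$, excludes the recurrent alternative near the start, so $\K$ genuinely escapes.

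The main obstacle is the limiting process that translates ``$\K$ escapes'' into the stated dichotomy about $\rho$ and its support. Assume for contradiction that both $\sup\rho$ and $\sup|x|$ on the supports are finite along $\K$. A uniform $L^\infty$ bound combined with a uniform bound $R$ on the support radii gives uniform $L^1$ and $L^\infty$ control on $\rho$, hence $\tfrac{1}{|x|}*\rho$ is uniformly bounded in $C^1(\real^3)$. Evaluating \eqref{NG} at any interior point of the support then bounds $\alpha$, and comparing the equation at two different values of $r$ inside the support controls $\kappa$. A diagonal subsequence extraction in $C_c^1$, combined with compactness of $T$, then shows that $\K$ would be precompact in $C_c^1(\real^3)\times\real\times\real$, contradicting the escape alternative. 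The delicate point is that Lemma \ref{lem: support bound} only furnishes a support bound when $\alpha$ is sufficiently negative; at the far end of the branch the support control must come from the assumed bound on $\sup|x|$ rather than from the lemma, which is precisely why the dichotomy is phrased in terms of $\rho$ or the support, and why the limit argument cannot be avoided.
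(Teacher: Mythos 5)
Your high-level plan (fixed-point reformulation, Leray--Schauder global continuation, contradiction via a limit argument) matches the paper in spirit, but there are several concrete gaps that would prevent the argument from going through as written.

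First, holding $\alpha$ fixed and continuing in $\kappa$ is not compatible with the mass constraint $\F_2(\rho)=\int\rho-M=0$. With $\alpha$ frozen, the operator $\F=(\F_1,\F_2)$ is over-determined: generically the $\kappa$-branch of solutions to $\F_1=0$ at fixed $\alpha$ will not satisfy $\F_2=0$ away from $\kappa=0$. The paper's Lemma \ref{lem: 4.5} is applied with $\xi=(\rho,\alpha)$ as the unknown and $\kappa$ as the bifurcation parameter; $\alpha$ is an unknown Lagrange-multiplier type constant that adjusts so the mass stays $M$. ``Assembling $\alpha$-slices'' does not produce a connected zero set of $\F$ in the required sense, because each slice is (generically) a single point. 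What actually produces $\K$ is the nested union $\K=\bigcup_N\K_N$ of connected sets obtained by applying the global IFT on the shrinking exhaustion $\O_N$, using that they all contain the base point $(\rho_0,0,\alpha_0)$.

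Second, the global implicit function theorem on an open set $U$ has \emph{three} alternatives (Theorem \ref{GIFT}): unboundedness, loop, \emph{or} approach to $\partial U$. The approach-to-boundary alternative, which in the paper's setup becomes $\inf_{\K_N}|\kappa^2\sup_x j+\alpha+\tfrac1N|=0$ and in the limit $\inf_\K|\kappa^2\sup_x j+\alpha|=0$, is essential and cannot be dropped: it is precisely the scenario the paper rules out first in the proof of Theorem \ref{thm: ang vel main}, using the decay condition \eqref{cond: omega 2} to show that the support would otherwise escape $R_*$ in the plane $x_3=0$. Your proposal never mentions this alternative, so your contradiction argument is incomplete.

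Third, the claim that ``comparing the equation at two different values of $r$ inside the support controls $\kappa$'' has a gap. The difference of \eqref{NG} at two interior points $x_1,x_2$ with $r_1\ne r_2$ contains $\kappa^2[j(r_1)-j(r_2)]$, but $j'(r)=r\,\omega^2(r)$ can vanish on an interval; if $\omega\equiv 0$ on $[0,R_*]$ (which is allowed), then $j$ is constant on the support and this comparison yields nothing. The paper circumvents this by comparing an interior point $x_n$ with a point $y_0$ \emph{outside} the support (where $\rho=0$ forces the bracket to be $\le 0$), chosen using \eqref{cond: omega 1} so that $j(y_0)>j(R_*)$ \emph{strictly}; the non-compact support of $\omega^2$ is what guarantees such a $y_0$ exists. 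Your interior-point argument does not use this hypothesis and can fail.

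Finally, a technical but real point: $C_c^1(\real^3)$ is not a Banach space, so the Leray--Schauder machinery cannot be run directly in that topology. The paper works in the weighted Banach space $\C_s$; the final solutions land in $C_c^1$ only a posteriori because of Lemma \ref{lem: support bound}. Your framework needs the weighted space to make the degree theory and compactness of $T$ rigorous.
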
 
 
There is another formulation that is popular in the astronomical literature where the 
angular velocity $\omega$ is replaced by the angular momentum $L$ per unit mass.  
Our results in the latter formulation are entirely analogous, as we  describe in \Cref{sec: ang mom}.  

 We end this introduction by describing how EP  reduces to \eqref{NG}.  
 The compressible Euler-Poisson equations (EP) are 
 \begin{equation}\label{eq: full Euler-Poisson}
\begin{cases}
\rho _t + \nabla\cdot (\rho v)=0, \\
(\rho v)_t + \nabla\cdot(\rho v\otimes v) + \nabla p = \rho \nabla U, \\
U(x,t) = \int_{\real^3}\frac{\rho(x',t)}{|x-x'|}~dx'.
\end{cases}
\end{equation}
The first two equations hold where $\rho>0$, and the last equation defines $U$ on the entire $\real^3$.
To close the system, one prescribes an isentropic equation of state $p = p(\rho)$. 
To model a rotating star, one looks for a steady axisymmetric rotating solution to \eqref{eq: full Euler-Poisson}.  
That is,  we assume $\rho$ is symmetric about the $x_3$-axis  
and $v=\ka\,\omega(r)(-x_2,x_1,0) $, where $r=r(x)=\sqrt{x_1^1+x_2^2}$ with a prescribed function $\omega(r)$. 
With such specifications, the first equation in \eqref{eq: full Euler-Poisson} concerning mass conservation 
is identically satisfied. The second equation in \eqref{eq: full Euler-Poisson} 
concerning momentum conservation simplifies to
\begin{equation}\label{eq: pre Euler-Poisson}
-\rho\, \ka\,r\omega^2(r) e_r + \nabla p = \rho\,\nabla \left(\frac{1}{|\cdot|}*\rho\right), \qquad e_r=\frac{1}{r(x)}(x_1,x_2,0). 
\end{equation}
The first term in \eqref{eq: pre Euler-Poisson} can be written as
$-\rho\nabla \left(\int_0^r \omega^2(s)s~ds\right).  $
Introducing the {specific enthalpy} $h$ as above, 
\eqref{eq: pre Euler-Poisson} becomes 
\eqn \label{eq: Euler-Poisson vector}
\nabla \left(\frac{1}{|\cdot|}*\rho +\ka\int_0^r \omega^2(s)s~ds - h(\rho) \right)=0,  \eeqn
which is the same as \eqref{NG}.

\section{Properties of Non-rotating Solutions}\label{nonrot}

In this section, we summarize some properties of the non-rotating radial (spherically symmetric) solutions 
to the semilinear elliptic equation
\eqn\label{eq: EP}
\Delta u + 4\pi h^{-1}(u_+) =0 \text{ in }\real^3.
\eeqn
Such radial solutions will be the starting point of the global set of axisymmetric solutions we will construct. 

We make the following assumptions on the equation of state $p(s)$:
\eqn\label{cond: p 1}
p(s)\in C^2_{loc}(0,\infty),~p'(s)>0.
\eeqn
There exists $\gamma\in (1,2)$ such that
\eqn\label{cond: p 2}
\lim_{s\to0^+}s^{2-\gamma}p''(s)=c_0>0.
\eeqn
There exists $\gamma^*\in(\frac65, 2)$ such that
\eqn\label{cond: p 3}
\lim_{s\to\infty}s^{1-\gamma^*}p'(s)=c_1>0.
\eeqn
As shown in Lemma 3.1 of \cite{strauss2017steady}, these assumptions imply that the enthalpy $h$, 
defined by $h'(\rho)={p'(\rho)}/{\rho},\ h(0)=0$, is a one-to-one map from $[0,\infty)$ to $[0,\infty)$.  
Its inverse $h^{-1}$ is locally $C^{1,\beta}$ on $[0,\infty)$, with 
$h^{-1}(0)= (h^{-1})'(0) = 0$  and 
\eqn
\lim_{s\to\infty}\frac{h^{-1}(s)}{s}=\infty,\quad \lim_{s\to\infty}\frac{h^{-1}(s)}{s^5} = 0.
\eeqn

It follows that for all $R_0>0$, equation \eqref{eq: EP} has a positive radial (spherically symmetric) solution 
$u_0\in C^2(\overline{B_{R_0}})$ with zero boundary values on  $\pa B_{R_0} = \{x: |x|=R_0\}$   
 (see Lemma 3.2 in \cite{strauss2017steady}).  
Thus $\rho_0 := h^{-1}(u_0)$ belongs to  $C^{1,\beta}(\real^3)$ when extended to be zero outside $B_{R_0}$ 
(see Lemma 3.3 in \cite{strauss2017steady}).  
Radial solutions of \eqref{eq: EP} solve the ODE 
\eqn\label{eq: LE ode}
u'' + \frac2{|x|}u' + 4\pi h^{-1}(u_+)=0,  \eeqn 
where $'$ denotes the radial derivative.  
We denote by $u(|x|;a)$ the solution of \eqref{eq: LE ode} satisfying  $u(0;a)=a$, $u'(0;a)=0$.   
(In \cite{strauss2017steady}, $u(|x|;a)$ is denoted by $v(r;a)$.) 
For $a>0$, there are only two possibilities for the behavior of $u(|x|;a)$:
\begin{enumerate}[(i)]
\item There exists a unique $R(a)>0$ such that $u(R(a);a)=0$.
\item $u(|x|;a)>0$ for all $|x|\ge 0$.
\end{enumerate}
Let us denote by $\A$ the set of all $a$'s such that possibility (i) holds.  Note that $u_0(0)\in \A$.  
Furthermore, $\A$  is an open set, as is easily seen by considering the fact that for $a_0\in \A$ we have 
$u(R(a_0);a_0)=0$ and $u'(R(a_0);a_0)\ne0$.  
The implicit function theorem implies that $u(R(a);a)=0$ has a solution $R(a)$ for all $a$ sufficiently near $a_0$.  

 Now for $a\in \A$, we can define the \emph{physical mass} of the compactly supported radial solution $[u(|x|;a)]_+$ as
\eqn\label{eq: def mass}
M(a) = \int_{B_{R(a)}}h^{-1}(u(|x|;a))~dx = \int_0^{R(a)}4\pi h^{-1}(u(r;a))r^2~dr.
\eeqn 
Note that $M(a)>0$ and $M(\cdot)$ is differentiable on $(0,\infty)$.  
Throughout this paper we make the following assumptions on the function $M(a)$:
\eqn\label{cond: mass nonzero roc}
M'(u_0(0))\ne 0  \eeqn
and 
\eqn\label {cond: mass distinct}
M(a)\ne M(u_0(0))\ ,  \qquad \forall a\in \A, \ a\ne u_0(0).     \eeqn
Assumptions \eqref{cond: mass nonzero roc} and \eqref{cond: mass distinct} are used in 
Lemmas \ref{lem: 4.3}   and \ref{lem: 4.5}, respectively.   
Now we provide two examples of equations of state that satisfy 
both of these assumptions. 

\begin{lemma} 
Suppose that either one of the following conditions holds for the equation of state $p(s)$:
\begin{enumerate}[(a)]
\item $p(s)=s^\gamma$, where $\gamma\in(\frac65, 2),\gamma\ne \frac43$.
\item $p(s)$ satisfies \eqref{cond: p 1}, \eqref{cond: p 2}, \eqref{cond: p 3}, and 
\eqn\label{eq: p eq}
p'(s)<h(s)\le 2p'(s) \text{ for }s>0.
\eeqn
\end{enumerate}
Then $\A=(0,\infty)$, and \eqref{cond: mass nonzero roc} and \eqref{cond: mass distinct} are satisfied. 
\end{lemma}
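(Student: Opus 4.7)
The plan is to treat cases (a) and (b) separately, since case (a) admits an explicit scaling symmetry of Lane--Emden type while case (b) has no such symmetry and requires a Sturm--Liouville style monotonicity argument.

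In case (a), $p(s)=s^\gamma$ yields $h(\rho)=\frac{\gamma}{\gamma-1}\rho^{\gamma-1}$, hence $h^{-1}(u)=c_\gamma u^n$ with $n:=1/(\gamma-1)\in(1,5)$ (because $\gamma\in(6/5,2)$) and $c_\gamma$ an explicit constant. Equation \eqref{eq: LE ode} is then a subcritical Lane--Emden equation, for which classical ODE theory already guarantees that every initial-value solution $u(\cdot;a)$ with $a>0$ hits zero at some finite $R(a)$, so $\A=(0,\infty)$. By ODE uniqueness and the homogeneity of $u^n$, one verifies directly that $u(r;a)=a\,u(a^{(n-1)/2}r;\,1)$, and therefore $R(a)=a^{-(n-1)/2}R(1)$. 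Changing variables in \eqref{eq: def mass} then yields
\[
M(a)=a^{(3-n)/2}M(1)=a^{(3\gamma-4)/(2(\gamma-1))}M(1).
\]
The exponent vanishes exactly at $\gamma=4/3$, which is precisely the excluded value, so $M$ is a strictly monotone power of $a$, and both \eqref{cond: mass nonzero roc} and \eqref{cond: mass distinct} follow at once.

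In case (b) the scaling symmetry is lost, so I would proceed differently. First, for $\A=(0,\infty)$: the growth hypotheses \eqref{cond: p 1}--\eqref{cond: p 3}, together with \eqref{eq: p eq}, force $h^{-1}$ to grow faster than $u$ at infinity but slower than $u^5$, which via a standard Pohozaev-type energy argument on the radial ODE \eqref{eq: LE ode} prevents any positive radial solution from persisting to $|x|=\infty$. Second, for the mass conditions, I would differentiate
\[
M(a)=\int_0^{R(a)}4\pi h^{-1}(u(r;a))\,r^2\,dr
\]
in $a$. The boundary term vanishes because $u(R(a);a)=0$, leaving an integral involving $\varphi:=\partial_a u$, which solves the linearization of \eqref{eq: LE ode} with $\varphi(0)=1$, $\varphi'(0)=0$. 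The key is to pin down the sign of $M'(a)$. The structural inequality \eqref{eq: p eq}, i.e.\ $p'(s)<h(s)\le 2p'(s)$, is precisely what compares the linearized operator to the original one in the right direction: the lower bound $p'<h$ rules out zeros of $M'$, while the upper bound $h\le 2p'$ forces a single sign on $M'$ throughout $\A$. A Pohozaev-type identity for $\varphi$, obtained by multiplying the linearized equation by a suitable combination of $u$ and $ru'$, is where \eqref{eq: p eq} is consumed. Monotonicity of $M$ then delivers \eqref{cond: mass nonzero roc} and \eqref{cond: mass distinct} simultaneously.

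The main obstacle is case (b), specifically the \emph{global} injectivity of $M$. In case (a) it is trivial from the explicit power formula; in case (b) one must rule out the possibility that two distinct central densities give the same total mass \emph{without} a scaling symmetry. This requires finding the right Pohozaev-type test function for the linearized equation and checking that the asymmetric inequality $p'<h\le 2p'$ translates into a definite sign for the resulting boundary/integral expression. The exclusion of $\gamma=4/3$ in case (a) and the precise form of \eqref{eq: p eq} in case (b) both reflect the same critical-exponent obstruction that the proof must navigate.
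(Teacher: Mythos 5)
Your case (a) is essentially the paper's argument: derive the scaling law $u(r;a)=a\,u(a^{(n-1)/2}r;1)$ from the homogeneity of $h^{-1}$, deduce $M(a)=a^{(3\gamma-4)/(2(\gamma-1))}M(1)$, and observe that the exponent vanishes only at $\gamma=4/3$. That part is correct and matches.

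Your case (b) has a genuine gap in both halves. For $\A=(0,\infty)$, you argue from the \emph{large-$u$} behavior (``grows faster than $u$ but slower than $u^5$ at infinity'') and invoke an unspecified Pohozaev-type argument. This is not where the inequality $h\le 2p'$ is actually needed, and a Pohozaev-type argument requires decay of the positive solution at infinity, which is not given a priori. The paper's proof instead extracts information \emph{near $t=0$}: from $h(s)\le 2p'(s)=2sh'(s)$ one gets $t(h^{-1})'(t)\le 2h^{-1}(t)$, hence $h^{-1}(t)\ge h^{-1}(1)\,t^2$ for $0<t<1$, which forces $\int_0^1 h^{-1}(t)\,t^{-4}\,dt=\infty$. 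This is precisely the hypothesis of Makino's Theorem 1 in \cite{makino1984existence}, which then gives $\A=(0,\infty)$ directly. So the relevant use of \eqref{eq: p eq} is a small-$t$ lower bound on $h^{-1}$, not a growth bound at infinity; the subcritical upper bound $h^{-1}(u)/u^5\to 0$ alone does not suffice to conclude, and your proposed route does not clearly close.

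For the mass conditions, you correctly identify the target $M'(a)\ne 0$ (in fact, a single sign throughout $\A$), and the idea of working with the linearization $\varphi=\partial_a u$ is the right one, but the ``suitable Pohozaev-type identity'' is only gestured at, and the crucial step — showing that $p'<h\le 2p'$ pins down the sign of the relevant boundary term — is not carried out. The paper first normalizes the support to $B_1$ by a rescaling of the independent variable, establishes $M'(a)=-u_a'(1;a)$, and then appeals to Lemmas 4.3 and 4.9 of \cite{strauss2017steady}, where the sign of $u_a'(1;a)$ is derived from exactly the inequality \eqref{eq: p eq} via a comparison/Sturm-type argument in the variable $w=|x|u$. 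Without that ingredient, the assertion that $M$ is strictly monotone in case (b) is unsupported, and \eqref{cond: mass distinct} in particular does not follow from \eqref{cond: mass nonzero roc} alone.
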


\begin{proof}
First, if $p(s)=s^\gamma$, then $h^{-1}(s) = \left(\frac{\gamma-1}{\gamma}s\right)^{1/(\gamma-1)}$. 
By the scaling symmetry of \eqref{eq: LE ode} for this function  $h^{-1}$, we have
\eqn		\label{eq: u scaling}
u(|x|;a) = \frac{a}{a_0}u\left(\left( a/{a_0}\right)^{(2-\gamma)/(2\gamma-2)}|x|;a_0\right).  \eeqn
Thus $ \A = (0,\infty)$.
It follows from \eqref{eq: u scaling} and \eqref{eq: def mass} that 
\eqn
M(a) = \left(\frac a{a_0}\right)^{(3\gamma-4)/(2\gamma-2)} M(a_0)  \eeqn
for $a, a_0>0$.     
It is now obvious that both \eqref{cond: mass nonzero roc} and \eqref{cond: mass distinct} are satisfied if $\gamma\in(\frac65, 2)$, $\gamma\ne \frac43$.

Secondly, suppose (b) is satisfied.  The condition $h(s)\le 2p(s)$ in \eqref{eq: p eq} implies that 
$h(s)\le 2 sh'(s)$ by definition of $h$.
Thus with $t=h(s)$ we have
\eqn
t(h^{-1})'(t)\le 2h^{-1}(t) \text{ for }t>0.
\eeqn
Integration of this inequality yields 
\eqn
h^{-1}(t)\ge \frac{h^{-1}(1)}{t^2} \text{ for }0<t<1.
\eeqn
Thus the integral
$ \int_0^1 h^{-1}(t)t^{-4}~dt  $  diverges. 
So by Theorem 1 in \cite{makino1984existence}, 
no solution to \eqref{eq: LE ode} can stay positive for all $|x|$. 
This means that $\A=(0,\infty)$, so that the physical mass $M(a)$ is defined for all $a\in (0,\infty)$.   

Now if $u(|x|;a)$ is supported on the ball of radius $R(a)$, 
then $\tilde u(|x|) = u(R(a)|x|;a)$ is supported on $B_1$ and satisfies
$$
\tilde u'' + \frac2{|x|} \tilde u' + \widetilde{h^{-1}}(\tilde u_+)=0$$
where $\widetilde{h^{-1}}=R^2(a)h^{-1}$ satisfies the same kind of inequality as $h^{-1}$.   
Replacing $u$ by $\tilde u$ and $h^{-1}$ by $\widetilde {h^{-1}}$,  
we can therefore assume without loss of generality that $u(|x|;a)$ is supported on $B_1$.   
				Now the proof of Lemma 4.3  
in \cite{strauss2017steady} (without specializing the value of $a$) shows that $M'(a) = -u_a'(1;a)$.   
The subscript denotes the derivative with respect to $a$, 
while the prime denotes the derivative with respect to $|x|$. 
Letting $w=|x|u$ and $g(w,|x|) = 4\pi r h^{-1}(w/|x|)$, we have $u_a'(1;a)=w_a'(1;a)-w_a(1;a)$.  
Thus the conclusion of Lemma 4.9 in \cite{strauss2017steady} implies that $u_a'(1;a)<0$.   
Therefore both \eqref{cond: mass nonzero roc} and \eqref{cond: mass distinct} are satisfied. 
\end{proof}

\section{Formulation by Angular Velocity}
For simplicity of notation we assume $R_0=1$ for the solution $\rho_0$ in \Cref{nonrot} from now on.   
Let $M=\int_{B_1}\rho_0(x)~dx$ and 
\eqn  \label{jay}
j(x) = \int_0^{r(x)} s\ \omega^2(s)\ ds.  
\eeqn 
We will sometimes abuse notation and write $j(x)$ as $j(r(x))$. We assume that the rotation speed satisfies 
\eqn\label{cond: omega 1}
s\omega^2(s)\in L^1(0,\infty),\quad \omega^2(s)\text{ is not compactly supported},  \eeqn
and
\eqn\label{cond: omega 2}
\lim_{r(x)\to\infty} r(x)(\sup_x j - j(x))=0.  \eeqn 
This means that $\omega(r)$ decays to zero sufficiently fast as $r\to\infty$.  
It does not really matter because our purpose is to construct stars that have compact support, 
but it is a convenient assumption that was also made in \cite{auchmuty1971variational} for instance.

We define the operators 
$$
\F_1(\rho,\kappa, \alpha)  =  \rho(\cdot)  -   h^{-1}\left(
\left[ \frac{1}{ |\cdot |} * \rho(\cdot) + \kappa^2j(\cdot) + \alpha \right]_+\right),  $$
$$
\F_2(\rho)  =  \int_{\real^3} \rho(x)\ dx - M, $$
and the pair 
$$
\F(\rho,\kappa,\alpha) = (\F_1(\rho,\kappa, \alpha) , \F_2(\rho)).  $$
It is not hard to see that a solution to $\F(\rho,\kappa,\alpha)=0$ 
with $\rho\in C_{loc}(\real^3)\cap L^1(\real^3)$ will give rise to a solution of \eqref{eq: Euler-Poisson vector} with mass $M$. Indeed, on the set where $\rho$ is positive, one has
$$\frac1{|\cdot|}*\rho(x)+\kappa^2j(x)-h(\rho(x))+\alpha=0,$$
which is the same as \eqref{eq: Euler-Poisson vector}.
For fixed constants $s>3$, 
we define the weighted space 
$$ 
\C_s = \left\{  f:\real^3\to \real\ \Big|\ f \text{ is continuous, axisymmetric, even in }x_3, 
\text{ and } \|f\|_s <\infty\right\},  $$ 
where 
$$
\|f\|_s =: \sup_{x\in\real^3}\langle x\rangle^s|f(x)| <\infty . $$ 
We also define for $N>0$, 
\eqn  \label{Oh}
\O_N = \left\{ (\rho,\kappa,\alpha)\in \C_s\times\real^2\ \Big|\  \alpha +  \kappa^2 \sup_x j(x) < - \frac1N \right\}. \eeqn

We are looking for solutions of $\F(\rho,\kappa,\alpha)=0$.  
We will find them by a continuation argument starting from 
the non-rotating solution, which  satisfies 
$\F(\rho_0,0,\alpha_0)=0$.  
A key device in our proof is to control the supports of the stars.  We begin with the following 
simple, but important, observation.   

\begin{lemma} \label{lem: support bound}
For all $ (\rho,\kappa,\alpha)\in \O_N$, the expression 
$\left[ \frac{1}{ |\cdot |} * \rho(\cdot) + \kappa^2j(\cdot) + \alpha \right]_+$ 
is supported in the ball $\{ x\in\real^3\ : |x|\le C_0 N \|\rho\|_s \}$, where $C_0$ is an absolute constant.   
\end{lemma}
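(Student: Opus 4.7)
The plan is to reduce the claim to a pointwise bound on the Newtonian potential $\tfrac{1}{|\cdot|}*\rho$, using the hypothesis $\alpha + \kappa^2 \sup_x j(x) < -1/N$ defining $\O_N$.

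First I would observe that at any point $x$ in the support of $\left[\tfrac{1}{|\cdot|}*\rho(\cdot) + \kappa^2 j(\cdot) + \alpha\right]_+$, one has
$$
\frac{1}{|\cdot|}*\rho(x) \;>\; -\kappa^2 j(x) - \alpha \;\geq\; -\kappa^2\sup_y j(y) - \alpha \;>\; \frac{1}{N},
$$
since $j$ is nonnegative (and bounded, by \eqref{cond: omega 1}). So the entire claim reduces to showing a decay bound of the form $\tfrac{1}{|\cdot|}*\rho(x) \leq C\,\|\rho\|_s/|x|$ for $|x|\geq 1$, because then $1/N < C\,\|\rho\|_s/|x|$ immediately yields $|x| \leq CN\|\rho\|_s$.

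The decay estimate is the only real work. I would split
$$
\frac{1}{|\cdot|}*\rho(x) \;=\; \int_{|x-y|\leq |x|/2}\frac{\rho(y)}{|x-y|}\,dy \;+\; \int_{|x-y|> |x|/2}\frac{\rho(y)}{|x-y|}\,dy.
$$
On the second region, $1/|x-y| \leq 2/|x|$, and $\rho\in \C_s$ with $s>3$ gives
$$
\int_{\real^3}|\rho(y)|\,dy \;\leq\; \|\rho\|_s\int_{\real^3}\langle y\rangle^{-s}\,dy \;\leq\; C\|\rho\|_s,
$$
so this piece is bounded by $C\|\rho\|_s/|x|$. On the first region, $|y|\geq |x|/2$, hence $|\rho(y)|\leq \|\rho\|_s\langle |x|/2\rangle^{-s}$, and the integral in polar coordinates centered at $x$ gives
$$
\int_{|x-y|\leq |x|/2}\frac{|\rho(y)|}{|x-y|}\,dy \;\leq\; \|\rho\|_s\,\langle |x|/2\rangle^{-s}\,\cdot\, 2\pi\bigl(|x|/2\bigr)^2 \;\leq\; C\,\|\rho\|_s\,|x|^{2-s},
$$
which, since $s>3$, is dominated by $C\|\rho\|_s/|x|$ for $|x|\geq 1$. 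Adding the two pieces yields $\tfrac{1}{|\cdot|}*\rho(x)\leq C_0\|\rho\|_s/|x|$ with an absolute constant $C_0$, completing the proof.

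The main obstacle is just the careful split of the convolution into near- and far-field parts; the rest is bookkeeping. The choice $s>3$ is used twice: once for the $L^1$ bound on $\rho$ in the far region, and once so that $|x|^{2-s}$ decays faster than $|x|^{-1}$ in the near region. No properties of $j$ beyond boundedness, and no properties of $\kappa$ or of the sign of $\alpha$ beyond those encoded in $(\rho,\kappa,\alpha)\in\O_N$, are needed.
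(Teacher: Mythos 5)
Your proof is correct and takes essentially the same route as the paper: both reduce the claim to the decay bound $\left|\tfrac{1}{|\cdot|}*\rho(x)\right|\le C_0\|\rho\|_s/\langle x\rangle$ (a consequence of $s>3$), then combine it with the defining inequality $\alpha+\kappa^2\sup_x j(x)<-1/N$ of $\O_N$. The paper simply asserts the potential decay; you supply the near/far-field split that justifies it, which is fine.
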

\begin{proof} 
First we note that $\left| \frac{1}{ |\cdot |} * \rho(\cdot) (x)\right|\le C_0 \|\rho\|_s \frac1{\langle x\rangle}$  
because $s>3$.    Hence for $|x|>C_0N\|\rho\|_s$,
$$
\left[ \frac{1}{ |\cdot |} * \rho(\cdot) (x)+ \kappa^2j(x) + \alpha \right] 
\le C_0 \|\rho\|_s \frac1{\langle x\rangle} - \frac1N < 0  $$
since $ (\rho,\kappa,\alpha)\in \O_N$.  Therefore its positive part vanishes for such $x$.  
\end{proof}

\section{Basic Properties}
\begin{lemma} \label{lem: 4.1}
$\F$ maps $\O_N$ into $\C_s\times\real$.  
It is $C^1$ Fr\'echet differentiable, with Fr\'echet derivative given by
\eqn\label{eq: frechet}
\frac{\pa\F}{\pa(\rho,\kappa, \alpha)} (\delta\rho,\delta\kappa,\delta\alpha)  
=  \left(\delta\rho  - \L(\delta\rho,\delta\kappa,\delta\alpha), \int_{\real^3}\delta\rho(x)~dx\right), 
\eeqn
where
\eqn\label{def: L}
\L(\delta\rho,\delta\kappa,\delta\alpha) = 
(h^{-1})'\left(
\left[ \frac{1}{ |\cdot |} * \rho(\cdot) + \kappa^2j(\cdot) + \alpha \right]_+\right)
\left(\frac{1}{ |\cdot |} * \delta\rho  +  2\kappa(\delta\kappa)j  +  \delta\alpha  \right).   
\eeqn
\end{lemma}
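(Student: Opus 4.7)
The plan is to verify in sequence that (i) $\F$ maps $\O_N$ into $\C_s\times\real$, (ii) the formula \eqref{eq: frechet}--\eqref{def: L} is indeed the Fr\'echet derivative, and (iii) this derivative depends continuously on $(\rho,\kappa,\alpha)$. For (i), any $\rho\in\C_s$ with $s>3$ is integrable, so $\F_2(\rho)\in\real$. For $\F_1$, Lemma \ref{lem: support bound} confines the argument of $h^{-1}$ to the ball of radius $C_0N\|\rho\|_s$, and since $h^{-1}$ is continuous with $h^{-1}(0)=0$, the composite $h^{-1}([\cdot]_+)$ is a continuous, compactly supported function, hence trivially a member of $\C_s$. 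Axisymmetry and evenness in $x_3$ are preserved because $j$ depends only on $r$ and the Newtonian kernel is radial.

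For (ii), I would write $\F_1=\mathrm{id}-\phi\circ U$ with $U(\rho,\kappa,\alpha)=\frac{1}{|\cdot|}*\rho+\kappa^2 j+\alpha$ and $\phi(z):=h^{-1}(z_+)$. The map $U$ has the obvious Fr\'echet derivative $DU\cdot(\delta\rho,\delta\kappa,\delta\alpha)=\frac{1}{|\cdot|}*\delta\rho+2\kappa(\delta\kappa)j+\delta\alpha$, bounded from $\C_s\times\real^2$ into $L^\infty$ via $\|\tfrac{1}{|\cdot|}*\delta\rho\|_\infty\le C\|\delta\rho\|_s$, which holds because $s>3$ makes the far-field integral $\int\langle y\rangle^{-s}dy$ convergent. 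The key observation is that since $h^{-1}$ is locally $C^{1,\beta}$ on $[0,\infty)$ with $h^{-1}(0)=(h^{-1})'(0)=0$, the composite $\phi$ extends to a $C^{1,\beta}$ function on all of $\real$ with $\phi'(z)=(h^{-1})'(z_+)$; the vanishing of $(h^{-1})'$ at the origin is precisely what smooths out the kink of $[\cdot]_+$ at $z=0$. The chain rule then produces the candidate \eqref{eq: frechet}--\eqref{def: L}.

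To verify that this is indeed the Fr\'echet derivative, set $\delta U=\frac{1}{|\cdot|}*\delta\rho+2\kappa(\delta\kappa)j+\delta\alpha$ and $\delta U'=\delta U+(\delta\kappa)^2 j$, so that $U(\rho+\delta\rho,\kappa+\delta\kappa,\alpha+\delta\alpha)=U+\delta U'$. For perturbations small enough that the new triple still belongs to $\O_N$, both $[U]_+$ and $[U+\delta U']_+$ are supported in a common bounded ball $B_R$ on which $U$ and $U+\delta U'$ are uniformly bounded, so the $C^{1,\beta}$ regularity of $\phi$ on a compact interval yields the pointwise estimate
\[
\bigl|\phi(U+\delta U')-\phi(U)-\phi'(U)\delta U\bigr|\le C|\delta U'|^{1+\beta}+C(\delta\kappa)^2\sup_x j,
\]
the last term absorbing the difference $\phi'(U)(\delta U'-\delta U)$. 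Since both sides vanish outside $B_R$, the weight $\langle x\rangle^s$ contributes only a constant factor, and $\|\delta U'\|_\infty\le C\|(\delta\rho,\delta\kappa,\delta\alpha)\|$, so the $\C_s$-norm of the remainder is $o(\|(\delta\rho,\delta\kappa,\delta\alpha)\|)$. Continuity of the derivative in the base point then reduces to the uniform (on a common compact support) continuity of $(h^{-1})'\circ[\cdot]_+$ and of $U$ in $(\rho,\kappa,\alpha)$, both of which are routine.

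The main obstacle is the non-smoothness of $[\cdot]_+$; without extra structure the composition $h^{-1}\circ[\cdot]_+$ would at best be one-sidedly differentiable at configurations where $\rho$ sits exactly at the vacuum boundary on a set of positive measure. What rescues Fr\'echet differentiability is precisely the degeneracy $(h^{-1})'(0)=0$, itself a consequence of the equation-of-state hypotheses \eqref{cond: p 1}--\eqref{cond: p 3}, and this is what enables the implicit function and Leray--Schauder degree arguments used in the subsequent sections.
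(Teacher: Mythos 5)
Your proposal is correct and follows essentially the same route as the paper: reduce to a common compact ball via Lemma~\ref{lem: support bound}, use the trivial weight bound there, invoke differentiability of the Nemytskii map $u \mapsto h^{-1}(u_+)$ on $C^0(\overline{B_R})$, and apply the chain rule. The only difference is that you make explicit (and quantitative via the local $C^{1,\beta}$ regularity) what the paper dismisses as an ``obvious fact,'' namely that $(h^{-1})'(0)=0$ is exactly what makes $z\mapsto h^{-1}(z_+)$ a $C^1$ function across $z=0$, and hence that the remainder vanishes outside $B_R$.
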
 
\begin{proof}
$\F_2$ is very simple so we concentrate on $\F_1$. We need to show that $\F_1\in \C_s$.  
By Lemma \ref{lem: support bound}, we may focus on the ball $|x|\le C_0N\|\rho\|_s$.  
Since $h^{-1}$ is increasing, we have   
\begin{align}
&~\sup_{x\in\real^3} \langle x\rangle^s\ h^{-1}\left(\left[ \frac{1}{ |\cdot |} * \rho(\cdot) (x)+ \kappa^2j(x) 
+ \alpha \right]_+\right) \notag\\
\le  &~\sup_{|x|\le C_0N\|\rho\|_s}  \langle x\rangle^s\ h^{-1}\left(C_0 \|\rho\|_s \frac1{\langle x\rangle}\right)  \notag\\
\le  &~\langle C_0N\|\rho\|_s\rangle^s h^{-1}\left(C_0\|\rho\|_s\right).  
\end{align}
This shows that $\F_1(\rho,\kappa,\alpha) \in \C_s$. 
In order to prove the Fr\'echet differentiability, we again use Lemma \ref{lem: support bound} 
to deduce that 
$\left[ \frac{1}{ |\cdot |} * (\rho+\delta\rho)(\cdot)+ (\kappa+\delta\kappa)^2j + \alpha+\delta\alpha \right]_+$ 
is supported in some fixed ball $B_R$ for fixed $(\rho,\kappa,\alpha)\in \O_N$ 
and sufficiently small $(\delta\rho,\delta\kappa,\delta\alpha)$. 
Note that for $u\in \C_s$ supported in $B_R$, $\|u\|_{s}\le\langle R\rangle^s \|u\|_{C^0(\overline{B_R})}$.  
Now we only need to recognize the obvious fact that $u\mapsto h^{-1}(u_+)$ as a mapping 
from $C^0(\overline{B_R})$ to itself is differentiable with derivative $(h^{-1})'(u_+)$. 
\Cref{def: L} follows by the chain rule. 
The continuity of the Fr\'echet derivative follows in a similar way, 
as $u\mapsto (h^{-1})'(u_+)$ is continuous on $C^0(\overline{B_R})$.
\end{proof}

\begin{lemma} \label{lem: 4.2}
For each $ (\rho,\kappa,\alpha)\in \O_N$, $\frac{\partial \F}{\partial(\rho,\alpha)}(\rho,\kappa,\alpha)$ 
is a Fredholm operator on $\C_s\times\real$.
\end{lemma}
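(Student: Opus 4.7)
The plan is to exhibit $T := \frac{\pa\F}{\pa(\rho,\alpha)}(\rho,\kappa,\alpha)$ as a compact perturbation of the identity on $\C_s\times\real$; the Riesz--Schauder theorem then delivers the Fredholm property (automatically with index zero).

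First I would set $\chi(x) := (h^{-1})'\bigl(\bigl[\tfrac{1}{|\cdot|}*\rho + \kappa^2 j + \alpha\bigr]_+(x)\bigr)$ and invoke Lemma \ref{lem: support bound}: the bracket vanishes outside the ball $B_R$ with $R = C_0 N\|\rho\|_s$, and since $(h^{-1})'(0)=0$ the multiplier $\chi$ is bounded and compactly supported in $\overline{B_R}$. From \eqref{def: L} we then have $\L(\delta\rho,0,\delta\alpha) = \chi\bigl(\tfrac{1}{|\cdot|}*\delta\rho + \delta\alpha\bigr)$, which is supported in $\overline{B_R}$. The decomposition
$$T(\delta\rho,\delta\alpha) = (\delta\rho,\delta\alpha) + \bigl(-\L(\delta\rho,0,\delta\alpha),\ \textstyle\int_{\real^3}\delta\rho(x)\,dx - \delta\alpha\bigr) =: (I+K)(\delta\rho,\delta\alpha)$$
reduces the problem to showing that $K:\C_s\times\real\to\C_s\times\real$ is compact.

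For the first component of $K$, I would take a bounded sequence $(\delta\rho_n,\delta\alpha_n)$ and argue as follows. Because $s>3$, the pointwise bound $|\delta\rho_n(y)|\le C\langle y\rangle^{-s}$ together with standard estimates for the Newtonian potential show that $u_n := \frac{1}{|\cdot|}*\delta\rho_n$ is uniformly bounded in $C^{0,\beta}(\overline{B_R})$ for some $\beta\in(0,1)$; by Arzelà--Ascoli a subsequence converges uniformly on $\overline{B_R}$. Multiplying by the bounded, compactly supported $\chi$, and noting that for a function supported in $\overline{B_R}$ the norm $\|\cdot\|_s$ is controlled by the $C^0$ norm on $\overline{B_R}$, one obtains convergence in $\C_s$. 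The contribution $-\chi\,\delta\alpha_n$ is a bounded scalar multiple of a single fixed element of $\C_s$, hence trivially precompact. For the scalar component of $K$, the map $(\delta\rho,\delta\alpha)\mapsto\int\delta\rho\,dx - \delta\alpha$ is a bounded linear map into the finite-dimensional space $\real$ (integrability of $\delta\rho$ uses $s>3$), so it is automatically compact. Combining these observations, $K$ is compact and $T=I+K$ is Fredholm of index zero.

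The only mildly delicate step is the Hölder regularity of the Newtonian potential on the fixed ball $\overline{B_R}$, which I would handle by splitting the convolution into its near and far parts and using $s>3$ for integrability of the tail. Everything else is a routine application of Arzelà--Ascoli and the fact that bounded operators into finite-dimensional spaces are compact.
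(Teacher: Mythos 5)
Your proof is correct and follows essentially the same route as the paper: decompose the derivative as identity plus compact and invoke Riesz--Schauder. You also fill in a small gap that the paper leaves implicit, namely that the scalar component $\int\delta\rho\,dx$ is not literally of the form $\delta\alpha + (\text{compact})$ until you rewrite it as $\delta\alpha + (\int\delta\rho\,dx - \delta\alpha)$ and note the correction term is finite-rank; the paper simply asserts ``this implies the Fredholm property'' after checking compactness of $\L(\cdot,0,\cdot)$. Your treatment of the convolution (H\"older estimates plus Arzel\`a--Ascoli on $\overline{B_R}$, then converting back to $\|\cdot\|_s$ via the compact support) is exactly what the paper's phrase ``it is obvious that $\delta\rho\mapsto\frac{1}{|\cdot|}*\delta\rho$ is compact from $\C_s$ to $C^0(\overline{B_R})$'' condenses. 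The one fact worth flagging explicitly, which you use correctly, is that $(h^{-1})'(0)=0$ — this is what makes the multiplier $\chi$ compactly supported rather than merely bounded; this is supplied by the hypotheses on $h$ recorded in Section 2.
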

\begin{proof}
By \eqref{eq: frechet}, we only need to show $\L(\cdot,0,\cdot)$ is compact.  
By \Cref{lem: support bound}, $\L(\cdot,0,\cdot)$ is supported in $B_R$ 
with $R$ depending only on $(\rho,\kappa,\alpha)$.   
It is obvious that $\delta\rho\mapsto \frac1{|\cdot|}*\delta\rho(\cdot)$ 
is compact from $\C_s$ to $C^0(\overline{B_R})$. This implies the Fredholm property. 
\end{proof}
 
 \begin{lemma} \label{lem: 4.3}
Let  $(\rho_0,0,\alpha_0)$ be the non-rotating solution.  
If \eqref{cond: mass nonzero roc} is true, then   
the nullspace of the linear operator $\frac{\partial \F}{\partial(\rho,\alpha)} (\rho_0,0,\alpha_0) $ is trivial.    
Therefore this operator is an isomorphism.  
\end{lemma}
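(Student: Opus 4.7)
The plan is to unpack the nullspace condition into an elliptic boundary-value problem, reduce it mode-by-mode via spherical harmonics, dispose of the radial mode using \eqref{cond: mass nonzero roc}, and handle the higher modes by a Wronskian comparison against the explicit dipole solution $-u_0'$.

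First, from the formula for $\L$ and the constraint $\int\delta\rho=0$, a kernel element $(\delta\rho,\delta\alpha)$ satisfies $\delta\rho=(h^{-1})'(u_0)\,v$ and $\int\delta\rho=0$, where $v:=\tfrac{1}{|\cdot|}*\delta\rho+\delta\alpha$. Because $(h^{-1})'(0)=0$ (by $\gamma>1$ in \eqref{cond: p 2}) and $u_0\equiv0$ outside $B_1$, the support of $\delta\rho$ lies in $\overline{B_1}$; hence $v$ is continuous on $\real^3$, harmonic outside $\overline{B_1}$ with $v(x)\to\delta\alpha$ as $|x|\to\infty$, and satisfies $-\Delta v=4\pi(h^{-1})'(u_0)v$ inside $B_1$. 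Expanding $v(x)=\sum_{\ell=0,2,4,\ldots} v_\ell(r)P_\ell(\cos\theta)$ (axisymmetry and evenness in $x_3$ kill the odd Legendre modes and the $e^{im\phi}$-modes with $m\ne0$), the harmonic continuation outside $B_1$ turns into a Robin matching condition $v_\ell'(1)+(\ell+1)v_\ell(1)=0$ for every $\ell\ge2$; only $\ell=0$ involves the free parameter $\delta\alpha$, through $v_0=\delta\alpha+B/r$ for $r>1$.

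For the radial mode, the unique regular radial solution of the linearized ODE is $u_a(r):=\partial_au(r;u_0(0))$, so $v_0=c\,u_a$ in $B_1$ for some constant $c$. By the shell theorem the constraint $\int\delta\rho=0$ is equivalent to vanishing of the exterior coefficient $B$ in $v_0=\delta\alpha+B/r$, and $C^1$-matching gives $B=-c\,u_a'(1)$. The identity $M'(u_0(0))=-u_a'(1;u_0(0))$ recorded in the proof of the lemma of Section \ref{nonrot} (and in Lemma 4.3 of \cite{strauss2017steady}) then reduces \eqref{cond: mass nonzero roc} to $c=0$, whence $v_0\equiv0$ in $B_1$ and $\delta\alpha=0$.

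The main obstacle is the higher modes $\ell\ge2$, where \eqref{cond: mass nonzero roc} gives no direct handle. My plan is to compare $v_\ell$ with the explicit $\ell=1$ solution $w:=-u_0'$ obtained by differentiating the equation for $u_0$: one has $w>0$ on $(0,1]$ by strict monotonicity of $u_0$, and $w'(1)+2w(1)=0$ because $u_0(1)=0$. A direct calculation produces the Wronskian identity
\[
\bigl(r^2[w\,v_\ell'-v_\ell\,w']\bigr)'=(\ell(\ell+1)-2)\,w\,v_\ell,
\]
whose right-hand side is strictly positive on any subinterval where $v_\ell>0$. Normalizing $v_\ell\sim r^\ell$ with positive coefficient near the origin, integrating the identity up to a putative first zero $r_0\in(0,1]$ of $v_\ell$ gives $r_0^2 w(r_0)v_\ell'(r_0)>0$, contradicting $v_\ell'(r_0)\le0$; thus $v_\ell>0$ on $(0,1]$. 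Integrating over $[0,1]$ and substituting both Robin conditions at $r=1$ then yields $-w(1)v_\ell(1)=(\ell+2)\int_0^1 w\,v_\ell\,dr$, whose two sides carry opposite signs---hence $v_\ell\equiv0$. Combining both cases, the kernel is trivial; together with \Cref{lem: 4.2}, trivial kernel promotes the Fredholm operator to an isomorphism.
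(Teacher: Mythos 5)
Your argument is correct and reaches the same conclusion, but it takes a genuinely more self-contained route than the paper's, which outsources both remaining steps to \cite{strauss2017steady}. The opening reduction is the same: write the kernel element as $\delta\rho=(h^{-1})'(u_0)v$, note the support is $\overline{B_1}$, and Legendre-decompose $v$. For the higher modes the paper invokes ``the same argument as in the proof of Lemma 4.5'' of \cite{strauss2017steady}, which from the hint about $\lim_{r\to0}w_{lm}/u_0'$ appears to be a Picone/ratio-type argument dividing by $u_0'$; you instead run a Sturm-type Wronskian comparison of $v_\ell$ against $-u_0'$ (the $\ell=1$ solution), using $(r^2[wv_\ell'-v_\ell w'])'=(\ell(\ell+1)-2)wv_\ell$ and the Robin data $v_\ell'(1)+(\ell+1)v_\ell(1)=0$, $w'(1)+2w(1)=0$ to produce a sign contradiction -- this is fully explicit and requires only $w=-u_0'>0$ on $(0,1]$, which holds since $(r^2u_0')'<0$. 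A small bonus of your version is that the restriction to $\ell\ge2$ is transparent (axisymmetry plus $x_3$-evenness kills odd $\ell$), whereas the paper writes $l\ge1$ and disposes of $l=1$ more obliquely via $\nabla w(0)=0$. For the radial mode the paper derives $w'(1)=0$ and cites Lemma 4.3 of \cite{strauss2017steady}; you instead identify $v_0=c\,u_a$ directly, convert the mass constraint to $B=-c\,u_a'(1)=0$ via the shell theorem, and use $M'(a)=-u_a'(1;a)$ to read off $c=0$. These are the same facts packaged differently, and your packaging makes visible exactly where \eqref{cond: mass nonzero roc} enters. The closing step -- trivial kernel plus the Fredholm-of-index-zero structure from \Cref{lem: 4.2} gives an isomorphism -- matches the paper.
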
 
\begin{proof}
From $\F(\rho_0,0,\alpha_0)=0$ we get
$$\rho_0- h^{-1}\left(\left[\frac1{|\cdot|}*\rho_0+\alpha_0\right]_+\right)=0.$$
    Denoting $u_0=h(\rho_0)$ as in Section \ref{nonrot},  we have
\eqn
u_0=\left[\frac1{|\cdot|}*\rho_0+\alpha_0\right]_+.
\eeqn
We also note the relation 
\eqn\label{eq: prime rel}
\rho_0' =( h^{-1})'(u_0)\cdot u_0'.
\eeqn
From $\frac{\partial \F}{\partial(\rho,\alpha)}(\rho_0,0,\alpha_0)(\delta\rho,\delta\alpha)=0$, we get
\eqn\label{eq: kernel 1}
\delta\rho-( h^{-1})'(u_0)\left(\frac1{|\cdot|}*\delta\rho+\delta\alpha\right)=0,
\eeqn
\eqn\label{eq: zero integral}
\int_{\real^3}\delta\rho(x)~dx=0.
\eeqn
Since $\rho_0$ and $u_0$ are supported on $B_1$, \eqref{eq: kernel 1} implies that $\delta\rho$ is also supported on $B_1$. Define $w=\frac{1}{|\cdot|}*\delta\rho+\delta\alpha$. By \eqref{eq: kernel 1}, $\delta\rho$ is H\"older continuous on $\real^3$. Thus $\Delta w = -4\pi\delta\rho$. By \eqref{eq: prime rel} and \eqref{eq: kernel 1}, we have 
\eqn\label{eq: Delta w} 
\Delta w = \begin{cases}-4\pi \frac{\rho_0'}{u_0'}w\quad &\text{ if }|x|\le 1,\\ 0 \quad &\text{ if }|x|>1.\end{cases}
\eeqn
Using spherical coordinates, we may regard $w$ as a function on $\mathbb{S}^2\times \real^+$. 
Multiplying \eqref{eq: Delta w} by the non-radial ($l\ge1$) spherical harmonic  $Y_{lm}$ 
and integrating over $\mathbb{S}^2$, we can write 
\eqn
\Delta w_{lm}-\frac{l(l+1)}{|x|^2}w_{lm} 
= \begin{cases}-4\pi \frac{\rho_0'}{u_0'}w_{lm}\quad &\text{ for }0<|x|\le 1,\\   
0 \quad &\text{ for }|x|>1,        \end{cases}
\eeqn
where $w_{lm} = \langle w,Y_{lm}\rangle_{\mathbb{S}^2}$.   
			The same argument as in the proof of Lemma 4.5 of   \cite{strauss2017steady}  
(where $w_{lm}$ is called $\varphi_{lm}$)  
will give us $w_{lm}=0$.   
There is a technical point in that argument which requires $\lim_{|x|\to 0^+}\frac{w_{lm}(|x|)}{u_0'(|x|)}=0$, 
or equivalently $\lim_{|x|\to 0^+}\frac{w_{lm}(|x|)}{|x|}=0$.
			In fact, this is true because 
\begin{align*}
\left|\frac{w_{lm}(|x|)}{|x|}\right|&=\left|\int_{\mathbb{S}^2}\frac{w(|x|\omega)}{|x|}\overline{Y_{lm}(\omega)}~d\omega\right|=\left|\int_{\mathbb{S}^2}\frac{w(|x|\omega)-w(0)}{|x|}\overline{Y_{lm}(\omega)}~d\omega\right|\\
&\le C\sup_{|y|\le |x|}|\nabla w(y)|
\end{align*}
The last quantity tends to $0$ as $|x|\to 0^+$, because $w\in C^1$, and $\nabla w(0)=0$ by the symmetry of $\delta\rho$.

We have now proven that $w$ must be a radial function.   
Integrating $\Delta w=-4\pi\delta\rho$ over $B_1$, using \eqref{eq: zero integral},  
and using the fact that $\delta\rho$ is supported on $B_1$, 
we get $w'(1)=0$.              Thus $w$ solves the boundary value problem
\eqn
\Delta w + 4\pi\frac{\rho_0'}{u_0'}w=0, \quad w'(1)=0 
\eeqn
on $B_1$.
By Lemma 4.3 of \cite{strauss2017steady}, $w$ vanishes in $B_1$. Thus $\delta\rho=0$ on $\real^3$. \Cref{eq: kernel 1} now implies $\delta\alpha=0$.   This means that the nullspace is trivial.  
\end{proof}

\begin{lemma}\label{lem: 4.4}
The nonlinear operator $(\rho,\kappa,\alpha) \mapsto  
h^{-1}\left(\left[ \frac{1}{ |\cdot |} * \rho(\cdot) + \kappa^2j(\cdot) + \alpha \right]_+\right)$
is compact from $\O_N$ into $\C_s$.
\end{lemma}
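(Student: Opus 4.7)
The plan is to reduce the claim to a compact-support situation by invoking Lemma \ref{lem: support bound}, and then piggy-back on the compactness of the Newtonian potential already exploited in Lemma \ref{lem: 4.2}. Fix a bounded sequence $(\rho_n,\kappa_n,\alpha_n)\subset\O_N$ with $\|\rho_n\|_s+|\kappa_n|+|\alpha_n|\le C$. By Lemma \ref{lem: support bound}, the function
$$F_n := h^{-1}\!\left(\left[\tfrac{1}{|\cdot|}*\rho_n+\kappa_n^2 j+\alpha_n\right]_+\right)$$
is supported in a fixed ball $B_R$ with $R=R(N,C)$ independent of $n$. On functions supported in $B_R$, the $\C_s$-norm and the $C^0(\overline{B_R})$-norm are equivalent (with constant $\langle R\rangle^s$), so it suffices to produce a $C^0(\overline{B_R})$-convergent subsequence of $(F_n)$.

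Next, I would extract the convergent pieces one at a time. Up to a subsequence, $\kappa_n\to\kappa_\infty$ and $\alpha_n\to\alpha_\infty$; since $j$ is bounded and continuous on $\overline{B_R}$ by \eqref{cond: omega 1}, the term $\kappa_n^2 j+\alpha_n$ converges uniformly on $\overline{B_R}$. For the Newtonian term, I would invoke the same fact used in Lemma \ref{lem: 4.2}: the map $\rho\mapsto \tfrac{1}{|\cdot|}*\rho$ is compact from $\C_s$ into $C^0(\overline{B_R})$ (one writes the kernel as $|x-y|^{-1}\langle y\rangle^{-s}\langle y\rangle^s$ to see that the integrand is integrable and produces an Arzel\`a–Ascoli-ready family — uniformly bounded, and equicontinuous since splitting the integral into a small ball around $x$ and its complement gives modulus-of-continuity estimates independent of $n$). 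Passing to a further subsequence, $\tfrac{1}{|\cdot|}*\rho_n$ converges in $C^0(\overline{B_R})$.

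Combining these, the argument $g_n := \tfrac{1}{|\cdot|}*\rho_n+\kappa_n^2 j+\alpha_n$ converges in $C^0(\overline{B_R})$ to some $g_\infty$, hence the bounded sequence $(g_n)_+$ converges uniformly as well (the positive part is $1$-Lipschitz). Since $h^{-1}$ is continuous on $[0,\infty)$, it is uniformly continuous on bounded subsets of $[0,\infty)$, so $h^{-1}((g_n)_+)\to h^{-1}((g_\infty)_+)$ uniformly on $\overline{B_R}$. Finally, because every $F_n$ vanishes outside $B_R$ and $(g_\infty)_+$ vanishes on $\partial B_R$ (by the same bound as in Lemma \ref{lem: support bound}, now applied at the limit), $F_n$ converges to $h^{-1}((g_\infty)_+)$ in $C^0(\mathbb{R}^3)$ and therefore in $\C_s$.

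The only step with any real content is the compactness of the Newtonian convolution from $\C_s$ into $C^0(\overline{B_R})$; everything else is continuity of scalar operations composed with the support reduction. Since that compactness is exactly the ingredient already used in the proof of Lemma \ref{lem: 4.2}, no new technical tool is required beyond carefully tracking the support via Lemma \ref{lem: support bound}.
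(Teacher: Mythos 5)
Your proof is correct and follows essentially the same route as the paper: reduce via Lemma~\ref{lem: support bound} to a fixed ball $B_R$, note the equivalence of $\C_s$ and $C^0(\overline{B_R})$ norms for functions supported there, and exploit the compactness of the Newtonian potential plus continuity of $[\cdot]_+$ and $h^{-1}$. The paper compresses the Arzel\`a--Ascoli step into a single ``obviously compact,'' whereas you spell it out and explicitly check that the limit $(g_\infty)_+$ also has support in $\overline{B_R}$, which is a worthwhile detail.
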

\begin{proof}
Following Nirenberg \cite{nirenberg1974topics}, 
 a continuous map $f$ is called {\it compact} if $\overline {f(K)}$ is 
a compact set for every closed bounded set $K$. 
Now by \Cref{lem: support bound}, if $(\rho,\kappa,\alpha)$ is bounded, the support of $\left[ \frac{1}{ |\cdot |} * \rho(\cdot) + \kappa^2j(\cdot) + \alpha \right]_+$ is contained in some ball $B_R$. The map is obviously compact from $\O_N$ to $C^0(\overline{B_R})$. Using again the trivial bound $\|u\|_{\C^s}\le \langle R \rangle^s \|u\|_{C^0(\overline{B_R})}$ for $u\in \C_s$ supported in $B_R$, we obtain the  compactness of this mapping into $\C_s$.
\end{proof}



\section{Global continuation}  
We now use the following form of the Global Implicit Function Theorem.
\begin{theorem}   \label{GIFT}
Let $X$ and $Z$ be Banach spaces and let $U$ be an open subset of $X\times\real$.  
Let $F:U\to Z$ be a $C^1$ mapping in the Fr\'echet sense.  
Let $(\xi_0, \kappa_0)\in U$ such that $F(\xi_0,\kappa_0)=0$.  
Assume that the linear operator $\frac{\pa F}{\pa\xi}(\xi_0,\kappa_0)$ is bijective from $X\times\real$ to $Z$.  
Assume that the mapping $(\xi.\kappa) \to F(\xi,\kappa)-\xi$ is compact from $U$ to $X$.  
Let $\S$ be the closure in $X\times\real$ of the solution set $\{(\xi,\kappa)\ \Big |\ F(\xi,\kappa)=0\}$. 
Let $\K$ be the connected component of $\S$ to which $(\xi_0,\kappa_0)$ belongs.   
Then one of the following three alternatives is valid. 
\begin{enumerate}[(i)]
\item $\K$ is unbounded in $X\times\real$.

\item $\K\backslash \{(\xi_0,\kappa_0)\}$ is connected.

\item $\K \cap \pa U \ne \emptyset$.  
\end{enumerate}
\end{theorem}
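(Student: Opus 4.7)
The plan is to proceed by contradiction and derive a contradiction from Leray-Schauder degree theory. First, rewrite the equation $F(\xi,\kappa)=0$ as a fixed-point equation $\xi=T(\xi,\kappa)$, where $T:=I-F$ is compact by hypothesis. Applying the classical local implicit function theorem at $(\xi_0,\kappa_0)$, using that $\pa F/\pa\xi(\xi_0,\kappa_0)$ is an isomorphism, produces a neighborhood $N_0\subset U$ and a $C^1$ arc $\Gamma=\{(\eta(\kappa),\kappa):|\kappa-\kappa_0|<\delta\}$ with $\Gamma=\S\cap N_0$, where $\S$ denotes the solution set; in particular $\xi_0$ is an isolated fixed point of $T(\cdot,\kappa_0)$ whose Leray-Schauder index equals $\pm 1$.

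Suppose all three alternatives fail. Then $\K$ is bounded, $\K\cap\pa U=\emptyset$, and $\K\setminus\{(\xi_0,\kappa_0)\}$ splits as a disjoint union $A\sqcup B$ of two nonempty relatively closed subsets. The compactness of $T$ combined with boundedness imply that $\K$ is compact in $U$, since any sequence of fixed points $\xi_n=T(\xi_n,\kappa_n)$ of a compact family has a convergent subsequence. Connectedness of $\K$ forces $(\xi_0,\kappa_0)\in\overline A\cap\overline B$, so inside $N_0$ one of the two half-arcs of $\Gamma\setminus\{(\xi_0,\kappa_0)\}$ lies in $A$ and the other in $B$. The most delicate step is now the topological separation: Whyburn's lemma, which says that every component of a compact subset of a locally compact metric space admits arbitrarily small isolating open neighborhoods whose boundaries miss the ambient set, produces a bounded open $V\subset U$ with $\K\subset V$ and $\pa V\cap \S=\emptyset$. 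Because $\overline A\cap\overline B=\{(\xi_0,\kappa_0)\}$ and the local structure around $(\xi_0,\kappa_0)$ is the smooth arc $\Gamma$, we then split $V$ into disjoint bounded open sets $V_A\supset A$ and $V_B\supset B$ with $\pa V_A\cap\S=\pa V_B\cap\S=\emptyset$, by cutting $V$ along a transverse $\kappa$-slice just past $\kappa_0$ so that one half-arc of $\Gamma$ ends up inside $V_A$ and the other inside $V_B$.

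Finally, apply the generalized Leray-Schauder homotopy invariance to $V_A$: since no fixed points lie on $\pa V_A$, the integer-valued function $d(\kappa):=\deg(I-T(\cdot,\kappa),(V_A)_\kappa,0)$ is independent of $\kappa\in\real$. As $\overline{V_A}$ is bounded, its $\kappa$-projection is a bounded interval, so $(V_A)_\kappa=\emptyset$ and $d(\kappa)=0$ for $\kappa$ outside that interval. Yet for $\kappa$ slightly past $\kappa_0$ on the $A$-side of $\Gamma$, the only fixed point of $T(\cdot,\kappa)$ in $(V_A)_\kappa$ is $\eta(\kappa)$, whose index equals $\pm 1$, so $d(\kappa)=\pm 1$. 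This contradicts the constancy of $d$. The main obstacle in executing the plan is precisely the Whyburn-style construction of the disjoint pair $V_A$, $V_B$ with $\S$-disjoint boundaries; all three failures of the alternatives are required together, since the compactness of $\K$ in $U$ (failures of (i) and (iii)) drives Whyburn's lemma, while the disconnection (failure of (ii)) is what makes the localized degree nonzero.
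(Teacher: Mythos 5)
The paper does not actually prove \Cref{GIFT}: it cites Rabinowitz (Theorem~3.2 of \cite{rabinowitz1971some}), Kielh\"ofer (Theorem~II.6.1 of \cite{kielhofer2006bifurcation}), and Alexander \cite{alexander1976implicit}, remarking that Kielh\"ofer's argument extends easily to a general open~$U$. Your sketch reproduces exactly the degree-theoretic strategy those references use: rewrite $F=0$ as a compact fixed-point problem, use the local implicit function theorem to obtain a nondegenerate local arc with Leray--Schauder index $\pm 1$, show $\K$ is compact when (i) and (iii) fail, isolate $\K$ from the rest of $\S$ with an open neighborhood via Whyburn's separation lemma, use the failure of (ii) to split the neighborhood, and then contradict the homotopy invariance of the degree. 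That is the same route, so there is no genuinely different method being proposed here.

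One point deserves a caution, however, precisely at the step you flag as the main obstacle. You ask for $V_A\supset A$ \emph{and} claim that for $\kappa$ slightly past $\kappa_0$ the slice $(V_A)_\kappa$ contains only the IFT solution $\eta(\kappa)$. These two demands are in tension: $A$ is a relatively open and closed piece of the compact connected set $\K\setminus\{(\xi_0,\kappa_0)\}$, and nothing prevents $A$ from looping back so that $\K\cap\{\kappa=\kappa_0+\e\}$, or even $\K\cap\{\kappa=\kappa_0\}$, contains solutions far from $\xi_0$; if $V_A\supset A$, those solutions land in $(V_A)_{\kappa_0+\e}$ and the claimed value $d(\kappa_0+\e)=\pm1$ is not immediate. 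The standard repair (as in Kielh\"ofer's proof) is not to insist $V_A\supset A$ globally but to build $V_A$ so that it tapers to a small ball about $\xi_0$ as $\kappa\to\kappa_0^+$, coincides with the Whyburn neighborhood $V$ for $\kappa\ge\kappa_0+\e$, and has boundary slices free of solutions, then compare the degree on the tapered slice $(V_A)_{\kappa_0}$ (where the IFT makes $\xi_0$ the only solution) with the degree on $V_{\kappa_0}$ via excision; the failure of (ii) is what guarantees a second solution at $\kappa_0$ cannot be reconciled with the index count. So the plan is the right one and identifies the right obstacle, but the precise construction of $V_A$ and the degree bookkeeping near $\kappa_0$ need to be done more carefully than the current phrasing suggests.
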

\begin{proof}
This is a standard theorem basically due to Rabinowitz, Theorem 3.2 in \cite{rabinowitz1971some}
in the case that $U=X\times\real$ and under some extra structural assumption.  
A more general version also appears in Theorem II.6.1 of \cite{kielhofer2006bifurcation};   
its proof is easy to generalize to permit a general open set $U$.  
The case of a general open set $U$ also appears explicitly in \cite{alexander1976implicit}.  
\end{proof}

\begin{lemma} \label{lem: 4.5}
There is a connected set $\K_N$ of solutions for which 
\begin{itemize}
\item either the solutions are unbounded in $\C_s\times\real^2$ 
\item or they approach the boundary of $\O_N$.  
\end{itemize}
\end{lemma}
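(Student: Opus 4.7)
My plan is to apply the Global Implicit Function Theorem (\Cref{GIFT}) directly, with state variable $\xi=(\rho,\alpha)\in X=\C_s\times\real$, continuation parameter $\kappa\in\real$, open set $U=\O_N$, target space $Z=\C_s\times\real$, base point $(\xi_0,\kappa_0)=((\rho_0,\alpha_0),0)$, and $F=\F$. The set $\K_N$ will be the connected component, furnished by \Cref{GIFT}, of the closed solution set that contains $(\rho_0,0,\alpha_0)$.

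The four hypotheses of \Cref{GIFT} are already in hand. The base point is a zero of $\F$ by construction of the non-rotating solution. $\F\in C^1(\O_N;Z)$ by \Cref{lem: 4.1}. The partial derivative $\frac{\pa\F}{\pa(\rho,\alpha)}(\rho_0,0,\alpha_0)$ is Fredholm of index zero by \Cref{lem: 4.2}, with trivial kernel by \Cref{lem: 4.3}, so the Fredholm alternative upgrades it to an isomorphism of $\C_s\times\real$. Finally, $(\xi,\kappa)\mapsto\F(\xi,\kappa)-\xi$ is compact on $\O_N$: its $\C_s$-coordinate is exactly the nonlinear operator of \Cref{lem: 4.4}, and its $\real$-coordinate $\int\rho-M-\alpha$ is continuous into a finite-dimensional target (continuity of $\rho\mapsto\int\rho$ on $\C_s$ is automatic because $s>3$ makes $\langle x\rangle^{-s}$ integrable on $\real^3$). \Cref{GIFT} then delivers $\K_N$, which must satisfy at least one of its three alternatives; alternatives (i) and (iii) correspond exactly to the two bullets of the lemma.

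The main obstacle I anticipate is eliminating the remaining scenario in which only alternative (ii) holds, namely that $\K_N\backslash\{(\rho_0,0,\alpha_0)\}$ is connected while $\K_N$ stays bounded and interior to $\O_N$. My plan is to invoke the local implicit function theorem — available precisely because \Cref{lem: 4.3} makes the linearization an isomorphism — to parameterize $\K_N$ near the base point as a $C^1$ curve $\kappa\mapsto(\rho(\kappa),\alpha(\kappa))$. Removing the center splits this local piece into two disjoint arms at $\kappa>0$ and $\kappa<0$. Reconnecting them inside $\K_N\backslash\{(\rho_0,0,\alpha_0)\}$ would, by the intermediate value theorem applied to the continuous parameter $\kappa$ along the reconnecting path, produce a second non-rotating solution of mass $M$ distinct from $(\rho_0,\alpha_0)$. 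This is excluded by the uniqueness implicit in assumption \eqref{cond: mass distinct}, after noting that axisymmetric positive compactly supported solutions of the non-rotating semilinear equation are automatically radial (for instance by a moving-plane argument). Hence only alternatives (i) and (iii) can survive, yielding the lemma.
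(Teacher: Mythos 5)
Your proposal is correct and follows essentially the same route as the paper: apply \Cref{GIFT} with $\xi=(\rho,\alpha)$ and $\kappa$ as the continuation parameter, verify the hypotheses via Lemmas~\ref{lem: 4.1}--\ref{lem: 4.4}, and rule out the ``loop'' alternative by showing it would produce a second non-rotating solution of mass $M$, then invoking radial symmetry (moving planes / Gidas--Ni--Nirenberg), the ODE uniqueness for \eqref{eq: LE ode}, and assumption \eqref{cond: mass distinct} to get a contradiction. The only cosmetic difference is that the paper motivates the existence of a second $\kappa=0$ point by noting the operator is even in $\kappa$, whereas you derive it from the local IFT curve plus an intermediate-value argument; these are interchangeable (and your phrase ``along the reconnecting path'' should be read as ``the image of the connected set under the continuous $\kappa$-projection is a connected subset of $\real$ meeting both signs,'' since $\K_N$ is only known to be connected, not path-connected). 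One detail to keep explicit when finishing the argument: you also need \Cref{lem: support bound} to know that $[u_1]_+$ is compactly supported, so that $u_1(0)\in\A$ and \eqref{cond: mass distinct} applies.
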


\begin{proof}
We apply Theorem \ref{GIFT}  with $X=Z=\C_s\times\real$, $U=\O_N$   
and $ \xi=(\rho,\alpha)$.  
The starting point is  $\kappa_0=0, \ \xi_0=(\rho_0,\alpha_0)$. 
The second alternative from that theorem is that it forms a ``loop", but   
we exclude the case of a loop as follows.

Suppose there were a loop.  This means that $\K_N\backslash (\rho_0, 0 ,\alpha_0)$ is connected.  
Since $\K_N$ is connected and the operator is even in $\kappa$, it follows that 
$\K_N\backslash (\rho_0, 0 ,\alpha_0)$ must contain a different point  with $\kappa=0$, say 
$(\rho_1,0,\alpha_1)\ne (\rho_0, 0 ,\alpha_0)$.  For this new point, $\kappa=0$ means there is no rotation.  
Defining $U_1=\frac1{|x|}*\rho_1$, we have 
$$\Delta U_1 = -4\pi\rho_1 = -4\pi h^{-1}([U_1+\alpha_1]_+) := f(U_1).$$ 
This function $f$ is $C^1$.  
Of course, $\rho_1\ge0$ so that $U_1>0$ in $\real^3$.   
So we can apply Theorem 4 in \cite{gidas1979symmetry} to deduce that $\rho_1$ is radial (spherically symmetric). Letting $u_1=U_1+\alpha_1$, we get
\eqn
u_1'' + \frac2{|x|} u_1' + 4\pi h^{-1}([u_1]_+)=0,\quad u_1'(0)=0.
\eeqn
Also by \Cref{lem: support bound}, $[u_1]_+$ is compactly supported.   
If $u_1(0)\ne u_0(0)$, then by \eqref{cond: mass distinct} we would have 
$\int_{\real^3}\rho_1(x)~dx\ne \int_{\real^3}\rho_0(x)~dx = M$.   
This would violate the equation $\F_2=0$. Thus $u_1(0)=u_0(0)$. 
By uniqueness of solutions to the initial value problem of equation \eqref{eq: LE ode}, we infer that $u_1 = u_0$. 
It follows that $\rho_1=\rho_0, \alpha_1=\alpha_0$, which is a contradiction.  So there is no loop. 
We deduce that 
either $(i)$ or $(iii)$ is valid; that is, either 
$$\sup_{\K_N} \ (\|\rho\|_s + |\kappa| + |\alpha|) =\infty$$
or
$$\inf_{\K_N} \ \left |\kappa^2 \sup_x j(x) + \alpha + \frac1N \right | = 0.$$
In other words, we have either unboundedness or approach to the boundary.  
\end{proof}
          
\begin{theorem}\label{thm: ang vel main}
Define the connected set $\K  =  \bigcup_{N=1}^\infty  \K_N$.  
Uniformly along $\K$, either $\rho$ is unbounded in $L^\infty$  
or the support of $\rho$ is unbounded.   
\end{theorem}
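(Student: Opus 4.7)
First I would verify that $\K$ is connected: since $\O_N \subset \O_{N+1}$ (as $-1/N < -1/(N+1)$), the connected component $\K_N$ of the solution set in $\O_N$ through $(\rho_0,0,\alpha_0)$ is contained in the analogous component $\K_{N+1}$, so $\K = \bigcup_N \K_N$ is a nested union of connected sets sharing a common point, hence connected. For the main claim I would argue by contradiction, assuming $\|\rho\|_\infty \le M_0$ and $\mathrm{supp}\,\rho \subset B_R$ uniformly along $\K$, which forces $\|\rho\|_s$ to be uniformly bounded as well. I then split according to whether $\K$ is bounded in $\C_s\times\real^2$.

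\emph{Unbounded case.} If $\K$ is unbounded, there is a sequence $(\rho_n,\kappa_n,\alpha_n)\in\K_{N_n}$ with $|\kappa_n|+|\alpha_n|\to\infty$. If $\kappa_n$ stays bounded, the $\O_N$-constraint $\alpha+\kappa^2\sup j<0$ forbids $\alpha_n\to+\infty$, so $\alpha_n\to-\infty$; then the argument of $[\,\cdot\,]_+$ tends uniformly to $-\infty$, forcing $\rho_n\to 0$ and contradicting $\int\rho_n = M > 0$. If $|\kappa_n|\to\infty$, I pick $x^*$ with $\rho_n(x^*) > 0$; since $r(x^*)\le R$ and the non-compact support of $\omega^2$ makes $j$ strictly increasing, the gap $\delta := \sup j - j(R) > 0$, and the equation rearranges to
\[
0 \le h(\rho_n(x^*)) = \tfrac{1}{|\cdot|}*\rho_n(x^*) + \kappa_n^2\bigl(j(x^*) - \sup j\bigr) + \bigl(\alpha_n + \kappa_n^2\sup j\bigr) \le C - \delta\kappa_n^2,
\]
which is absurd for large $n$.

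\emph{Bounded case.} If $\K$ is bounded, every $\K_N$ is bounded, so alternative (i) of \Cref{lem: 4.5} fails and (ii) must hold: each $\K_N$ contains a sequence with $\alpha+\kappa^2\sup j \to -1/N$. By the compactness in \Cref{lem: 4.4}, I can pass to a subsequential limit $(\rho_*^N,\kappa_*^N,\alpha_*^N)$ solving $\F=0$ with $\alpha_*^N+(\kappa_*^N)^2\sup j = -1/N$; then, by the uniform boundedness of $(\kappa,\alpha)$ along $\K$, I extract a further limit as $N\to\infty$ to obtain a solution $(\rho_*,\kappa_*,\alpha_*)$ with $\mathrm{supp}\,\rho_*\subset B_R$, $\|\rho_*\|_\infty\le M_0$, and the critical equality $\alpha_* + \kappa_*^2\sup j = 0$. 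The contradiction then comes from evaluating along $x = (r,0,0)$ as $r\to\infty$: the monopole expansion gives $\frac{1}{|\cdot|}*\rho_*(x) = \frac{M}{r}+O(1/r^2)$, while \eqref{cond: omega 2} gives $\kappa_*^2(\sup j - j(r)) = o(1/r)$, so the argument of $[\,\cdot\,]_+$ equals $\frac{M}{r}+o(1/r) > 0$ for all large $r$, forcing $\rho_*(x) > 0$ there and contradicting $\mathrm{supp}\,\rho_*\subset B_R$.

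The hard part will be the bounded case: one has to assemble a double-limit argument (first along each $\K_N$ to reach $\partial\O_N$, then $N\to\infty$ to reach the critical manifold $\alpha + \kappa^2\sup j = 0$) that preserves both the mass constraint and the hypothesized support and density bounds, and then play the precise decay in \eqref{cond: omega 2} against the universal $M/|x|$ tail of the Newtonian potential to defeat the limit solution. The unbounded case, by contrast, reduces to a direct pointwise estimate on the equation once a point in the support is fixed.
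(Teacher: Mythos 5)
Your proof is correct and follows essentially the same strategy as the paper: assume the density and supports are uniformly bounded (so $\|\rho\|_s$ is controlled), invoke the dichotomy from \Cref{lem: 4.5}, defeat the near-boundary alternative by playing the Newtonian tail $M/|x|$ against the $o(1/|x|)$ decay of $\sup j - j$ from \eqref{cond: omega 2}, and defeat the unbounded alternative by showing $\kappa_n\to\infty$ blows up the $j$-gap in the equation while $\alpha_n\to-\infty$ with bounded $\kappa$ kills the mass. Two small execution differences worth noting: in your large-$\kappa$ branch you substitute the $\O_N$ constraint $\alpha+\kappa^2\sup j<0$ directly, which is a bit slicker than the paper's comparison of the equation at $x_n$ and a fixed reference point $y_0$ outside the support; conversely, your bounded branch introduces an unnecessary double subsequential limit (which requires checking that $\F=0$ and the support bound persist at the limit), whereas the paper simply selects a solution in $\K$ with $\kappa^2\sup j+\alpha$ close enough to $0$ and evaluates \eqref{est: lower bound 2} there, with no limit extraction needed.
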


\begin{proof} 
Because the sets $\K_N$ are nested,   
$\K$ is also connected and one of the following statements is true:
\begin{enumerate}[(a)]
\item $\sup_{\K} \ (\|\rho\|_s + |\kappa| + |\alpha|) = \infty$.
\item $\inf_\K \ |\kappa^2 \sup_x j(x) + \alpha| = 0$.  
\end{enumerate}
In order to prove the theorem, we argue by contradiction.  
Suppose that both $\sup_\K \sup_{x\in\real^3} \rho(x) <\infty$ 
and $R_* =: \sup_\K  \sup \{x\in\real^3\ \Big| \ \rho(x)\ne0\}  <\infty$.   
We will first prove that (a) is true.  

Suppose that (a) is false.  Then (b) is true and $\sup_{\K} \ (\|\rho\|_s + |\kappa| + |\alpha|) < \infty$. 
Since $|x-y|\le |x|+R_*$ for all $y$ in the support of $\rho$,   
we have $$\left(\frac1{|\cdot|}*\rho\right) (x) = \int \frac1{|x-y|} \rho(y) dy  \ge  \frac M{|x|+R_*}.$$ 
We may now write
\eqn\label{est: lower bound 1}
\frac1{|\cdot|}*\rho(x)  + \kappa^2j(x) + \alpha \ge \frac{M}{|x|+R_*} -\kappa^2 (\sup_x j - j(x))+(\kappa^2 \sup_xj(x) + \alpha).
\eeqn
Let $\kappa_0=\sup_{\K}|\kappa|$.   
Let us consider a point $x$ in the plane $x_3=0$, whence  $|x|=r(x)$. By \eqref{cond: omega 2}, $\sup_xj-j(x) = o\left(\frac1{|x|}\right)$ as $|x|\to\infty$.
Thus by \eqref{est: lower bound 1}, 
\eqn\label{est: lower bound 2}
\frac1{|\cdot|}*\rho(x)  + \kappa^2j(x) + \alpha \ge\frac{M}{|x|+R_*} - o\left(\frac{\kappa_0^2}{|x|}\right)+(\kappa^2 \sup_xj(x) + \alpha).
\eeqn
Choosing $|x|>R_*$ sufficiently large, we can make the sum of the first two terms on the right side 
of \eqref{est: lower bound 2} positive.   
Because of (b), there exists a solution $(\rho,\kappa,\alpha)\in \K$ such that 
the right side of \eqref{est: lower bound 2} is positive.  
Due to $\F_1(\rho, \kappa, \alpha) = 0$, we have $\rho(x)>0$.  
This contradicts the assumption that the support of $\rho$ is bounded by $R_*$.  

Thus (a) must be true.  Since $\rho$ is pointwise bounded and its support is also bounded all along $\K$, 
it follows that $\rho$ is also bounded in the space $\C_s$.  
Because of (a), we know that $|\kappa| + |\alpha|$ must be unbounded.  
From the definition of $\O_N$, we know that $\alpha<0$.  
In case $\kappa$ were bounded, it would have to be the case that $\alpha \to -\infty$ along a sequence.  
Then the equation $\F_1=0$ would imply that $\rho\equiv0$, which contradicts the mass constraint.  

It follows that $\kappa_n \to \infty$  for some sequence 
$(\rho_n,\kappa_n,\alpha_n)\in \K$ with $\alpha_n<0$.   
For each $n$, let us choose any point $x_n$ such that $\rho_n(x_n)>0$.   
By \eqref{cond: omega 1}, we may choose a point $y_0$ such that $r(y_0)>R_*$ and $j(y_0)>j(R_*)$.  
Since  $\rho_n(y_0)=0$ and $\rho_n(x_n)>0$, we have 
$$
0 \ge \left[\frac{1}{ |\cdot |} * \rho_n(\cdot) + \kappa_n^2j(\cdot) + \alpha_n\right]  (y_0) 
\ge \left[ \frac{1}{ |\cdot |} * \rho_n(\cdot) + \kappa_n^2j(\cdot) + \alpha_n\right] \bigg|_{x_n}^{y_0}.   $$   
On the right side, the $\alpha_n$ cancels.  
Due to our assumption that 
the values of $\rho_n$ and the supports of $\rho_n$ are uniformly bounded, 
we deduce that 
$$ 
0 \ge \kappa_n^2 [j(r(y_0))-j(r(x_n))] - C,$$
where $C$ is a fixed constant .  
Thus $j(r(x_n)) \to j(r(y_0))$ since $\kappa_n\to\infty$. 
But $r(x_n)\le R_*<r(y_0)$ and $j$ is an increasing function of $r$, so that 
$j(r(x_n)) \le j(R_*) < j(r(y_0)).$  
This is the desired contradiction.  
\end{proof}

\section{Formulation by Angular Momentum}\label{sec: ang mom}

A different formulation of the rotating star problem that is popular in the literature (see \cite{auchmuty1971variational}) is to prescribe 
the angular momentum per unit mass $L(m)$  instead of the angular velocity $\omega(r)$. 
Under this formulation the velocity field is determined  by the function $L(m)$ 
and the density $\rho(x)$ in the following way.  One first defines the mass within a cylinder by 
\eqn
m_\rho(r) = \int_{x_1^2+x_2^2\le r^2}\rho(x)~dx.  \eeqn
Then the function $L$ is related to the angular velocity $\omega(r)$ by 
\eqn
L(m_\rho(r)) = r^4\omega^2(r).  \eeqn  
In other words, $L$ is the square of $r|v|$, the angular momentum per unit mass. In this section we will entirely eliminate consideration of $\omega(r)$, and replace it by $L(m)$.  

We make the following assumptions on the function $L(m)$: 
\eqn
L\ge0,   \quad   L\in C^{1,\delta}_{loc} ([0,\infty)),  \quad L(0)=L'(0)=0    \eeqn
for some $0<\delta<1$.  
The Euler--Poisson equations are reformulated as
\eqn\label{eq: EP setup 2}
\F(\rho,\kappa,\lb)=(\F_1(\rho,\kappa,\alpha),\F_2(\rho))=0,\eeqn
where
\eqn  \label{F1L} 
\F_1(\rho,\kappa,\lb)(x) = \rho(x) -  
h^{-1}\left(\left[\frac1{|\cdot|}*\rho(x)-\kappa^2\int_{r(x)}^\infty L(m_\rho(s))s^{-3}~ds+\lb\right]_+\right),\eeqn
and
\eqn
\F_2(\rho) = \int_{\real^3}\rho(x)~dx-M.\eeqn 
Here $\lb$ plays a similar role as $\alpha$ did in the earlier formulation but it is not the same constant.  
We define $\C_s$ as above, and define
\eqn
\O_N^*=\left\{(\rho,\kappa,\lb)\in \C_s\times \real^2~\big|~\lb<-\frac1N\right\}.\eeqn

\begin{lemma}  
The analogues of Lemmas 4.1-4.4 and 5.1 are true.  
\end{lemma}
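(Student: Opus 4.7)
The plan is to observe that the angular momentum formulation differs from the angular velocity formulation only by replacing the fixed axisymmetric function $\kappa^2 j(x)$ with the $\rho$-dependent functional $-\kappa^2 J_\rho(x)$, where $J_\rho(x) := \int_{r(x)}^\infty L(m_\rho(s))\,s^{-3}\,ds$. Since $L\ge0$ and hence $J_\rho\ge0$, the analogue of \Cref{lem: support bound} follows immediately from the same argument: for $(\rho,\kappa,\lambda)\in\O_N^*$,
\[
\frac{1}{|\cdot|}*\rho(x) - \kappa^2 J_\rho(x) + \lambda \;\le\; \frac{C_0\|\rho\|_s}{\langle x\rangle} - \frac{1}{N},
\]
which is negative for $|x|>C_0 N\|\rho\|_s$, so the positive part is again supported in a ball whose radius depends only on $N$ and $\|\rho\|_s$.

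The only genuinely new ingredient needed for the analogues of \Cref{lem: 4.1}--\Cref{lem: 4.4} is a study of the map $\rho\mapsto J_\rho$. First I would observe that $\rho\mapsto m_\rho$ is continuous and linear from $\C_s$ into $C([0,\infty))$ (using $s>3$), with derivative $m_{\delta\rho}$ defined by the same formula. Using $L\in C^{1,\delta}_{loc}$ together with $L(0)=L'(0)=0$, one has $L(m)=O(m^{1+\delta})$ and $L'(m)=O(m^\delta)$ for small $m$; combined with $m_\rho(s)=O(s^2)$ near $s=0$ (which holds whenever $\rho$ is bounded in $\C_s$), this shows that the integrands $L(m_\rho(s))\,s^{-3}$ and $L'(m_\rho(s))\,m_{\delta\rho}(s)\,s^{-3}$ are integrable near zero with absorbable power $s^{2\delta-1}$. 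Dominated convergence then yields Fr\'echet differentiability of $J_\rho$ in $\rho$, and the chain rule gives the analogue of \Cref{lem: 4.1} with the obvious modification of \eqref{def: L}. For the Fredholm property and the compactness of the nonlinear map, I would combine the standard compactness of $\delta\rho\mapsto\tfrac{1}{|\cdot|}*\delta\rho$ on the fixed ball $B_R$ given by the support bound with an Arzel\`a--Ascoli argument applied to $\{J_\rho\}$: for $\rho$ bounded in $\C_s$, $s\mapsto L(m_\rho(s))$ is uniformly bounded, and $J_\rho(r_1)-J_\rho(r_2)=\int_{r_1}^{r_2}L(m_\rho(s))s^{-3}\,ds$ gives uniform equicontinuity on $[r_0,R]$ for any $r_0>0$, while the H\"older bound near $0$ handles equicontinuity at $r=0$.

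For the analogue of \Cref{lem: 4.3}, the key observation is that at $\kappa=0$ the term $\kappa^2 J_\rho$ vanishes identically together with its linearization in $(\rho,\lambda)$, so the partial Fr\'echet derivative with respect to $(\rho,\lambda)$ at $(\rho_0,0,\lambda_0)$ coincides exactly with the operator analyzed in \Cref{lem: 4.3}; nullspace triviality carries over verbatim. For the analogue of \Cref{lem: 4.5} I would apply \Cref{GIFT} with $U=\O_N^*$ and base point $(\rho_0,0,\lambda_0)$; the only nontrivial step is again to rule out a loop. On a hypothetical loop there would be a second solution $(\rho_1,0,\lambda_1)\ne(\rho_0,0,\lambda_0)$ with $\kappa=0$, at which the $L$-term vanishes and the equation collapses to the non-rotating equation. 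The Gidas--Ni--Nirenberg theorem, the ODE uniqueness for \eqref{eq: LE ode}, and assumption \eqref{cond: mass distinct} then force $(\rho_1,\lambda_1)=(\rho_0,\lambda_0)$ exactly as in the proof of \Cref{lem: 4.5}.

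The main obstacle I anticipate is the careful treatment of the singular integrand $L'(m_\rho(s))\,s^{-3}$ near $s=0$ when differentiating $J_\rho$ in $\rho$. Once the two asymptotics $L'(m)=O(m^\delta)$ and $m_{\delta\rho}(s)=O(\|\delta\rho\|_s\, s^2)$ are combined, the pointwise bound $L'(m_\rho(s))\,m_{\delta\rho}(s)\,s^{-3}=O(s^{2\delta-1})$ absorbs the $s^{-3}$ singularity, and from that point on the entire analysis reduces to routine modifications of the angular velocity case.
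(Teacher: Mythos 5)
Your route is the same as the paper's: the support bound carries over verbatim because $J_\rho\ge0$; \Cref{lem: 4.2} and \Cref{lem: 4.3} are unchanged since they concern only the $\kappa=0$ case; the compactness analogue of \Cref{lem: 4.4} is obtained by Arzel\`a--Ascoli applied to $\{J_{\rho_n}\}$, using exactly the H\"older modulus near $r=0$ that you identify; and the loop in the analogue of \Cref{lem: 4.5} is excluded as before via Gidas--Ni--Nirenberg, ODE uniqueness, and \eqref{cond: mass distinct}. The one imprecision is in invoking dominated convergence to obtain Fr\'echet differentiability: DCT delivers only G\^ateaux differentiability (or continuity of the candidate derivative), not the uniform $o(\|\delta\rho\|_s)$ control of the remainder that Fr\'echet differentiability requires. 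The paper instead writes out the explicit Taylor-remainder estimate $\left|L(m_{\rho+\delta\rho}(s))-L(m_\rho(s))-L'(m_\rho(s))m_{\delta\rho}(s)\right|\le \|L'\|_{C^{0,\delta}}\,m_{\delta\rho}(s)^{1+\delta}$, which, combined with $m_{\delta\rho}(s)\le C\|\delta\rho\|_s\min(1,s^2)$ and the absorbable power $s^{2\delta-1}$ that you already note, bounds the remainder by a constant multiple of $\|\delta\rho\|_s^{1+\delta}$ uniformly on compact sets. You have all the ingredients for this estimate; simply replace the appeal to dominated convergence with it.
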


\begin{proof}
By the same argument as in \Cref{lem: support bound}, 
there is a bound on the support  of 
$$\left[\frac1{|\cdot|}*\rho(x)-\kappa^2\int_{r(x)}^\infty L(m_\rho(s))s^{-3}~ds+\lb\right]_+ .$$  
We also obtain \Cref{lem: 4.1}, with $\L$ replaced by
\begin{align}
\L(\delta\rho,\delta\kappa,\delta\alpha)(x) = &~(h^{-1})'\left(\left[\frac1{|\cdot|}*\rho(x)  
- \kappa^2\int_{r(x)}^\infty L(m_\rho(s))s^{-3}~ds + \lb\right]_+\right)\cdot                  \notag\\
&\qquad \bigg[\frac1{|\cdot|}*\delta\rho(x) - \kappa^2\int_{r(x)}^\infty L'(m_\rho(s))m_{\delta\rho}(s)s^{-3}~ds        \notag\\
&\qquad - 2\kappa(\delta\kappa)\int_{r(x)}^\infty L(m_\rho(s))s^{-3}~ds+\delta\lb\bigg].
\end{align}
				The key to justifying the Fr\'echet derivative is the estimate  
\begin{align}
&~\left|\int_{r(x)}^\infty\left[L(m_{\rho+\delta\rho}(s))-L(m_\rho(s))
- L'(m_{\rho}(s))m_{\delta\rho}(s)\right]s^{-3}~ds\right|                      \notag\\
\le &~\int_{r(x)}^\infty  \int_0^{m_{\delta\rho}(x)}  |L'(m_\rho(s)+t) - L'(m_\rho(s))| \ dt\  s^{-3}~ds       \notag\\
\le&~\|L\|_{C^{1,\delta}([0,A])} \int_{r(x)}^\infty [m_{\delta\rho}(s)]^{1+\delta}s^{-3}~ds. \label{eq: bound using L} 
\end{align}
where $A = 2\|\rho\|_{L^1}$. 
				Using  the simple fact that 
\eqn\label{eq: cylinder mass bound}
m_{\delta\rho}(r) \le  C\|\delta \rho\|_{s} \ \min(1,r^2),   \eeqn
we see that  \eqref{eq: bound using L} is uniformly bounded on compact sets by a 
constant multiple of $\|\delta\rho\|_{s}^{1+\delta}$.

\Cref{lem: 4.2} and \Cref{lem: 4.3} only involve the $\kappa=0$ case, thus they are valid without change. 
To prove \Cref{lem: 4.4}, we must show that a subsequence of $j_n(x)=\int_{r(x)}^\infty L(m_{\rho_n}(s))s^{-3}~ds$ 
converges uniformly on  compact sets if $\{\rho_n\}$ is bounded in $\C_s$. 
In fact, using \eqref{eq: cylinder mass bound} again as above, 
we see that $j_n(x)$ is uniformly bounded on a finite ball $B_R$. 
To obtain the equicontinuity of $j_n(x)$, we estimate
\begin{align*}
&~\int_{r(x)}^{r(y)}L(m_{\rho_n}(s))s^{-3}~ds\\
\le &~\|L\|_{C^{1,\delta}([0,C\|\rho_n\|_{s}])}\int_{r(x)}^{r(y)}(m_{\rho_n}(s))^{1+\delta}s^{-3}~ds\\
\le &~C\|L\|_{C^{1,\delta}([0,C\|\rho_n\|_{s}])}\|\rho_n\|_{s}^{1+\delta}\int_{r(x)}^{r(y)}s^{2\delta-1}~ds\\
\le &~C\|L\|_{C^{1,\delta}([0,C\|\rho_n\|_{s}])}\|\rho_n\|_{s}^{1+\delta}|x-y|^{\min(2\delta,1)}.
\end{align*}
We can now prove \Cref{lem: 4.5} in a similar way as before,  
thereby deducing that there is a connected set $\K^*\subset \bigcup_{N=1}^\infty \O_N^*$ 
of solutions to \eqref{eq: EP setup 2} such that at least one of the following statements is true:
\begin{enumerate}[(a)]
\item $\sup_\K(\|\rho\|_s+|\kappa|+|\lb|)=\infty$.
\item $\sup_\K \lb=0$. 
\end{enumerate}   
\end{proof}

We are now ready to prove
\begin{theorem}
Along the connected set $\K^*$, either $\rho$ is unbounded in $L^\infty$ or the support of $\rho$ is unbounded.
\end{theorem}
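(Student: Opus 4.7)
The plan is to follow the architecture of \Cref{thm: ang vel main}, adjusting for the fact that in \eqref{F1L} the rotation potential depends on $\rho$ through the cylindrical mass $m_\rho$. Arguing by contradiction, assume both $\sup_{\K^*}\sup_x\rho(x)<\infty$ and $R_*:=\sup_{\K^*}\sup\{|x|:\rho(x)>0\}<\infty$, and first rule out alternative (b) in isolation. If (b) held and (a) failed, then $\|\rho\|_s+|\kappa|+|\lb|$ would be bounded along $\K^*$ while some subsequence has $\lb_n\to 0^-$. Because $m_\rho(s)=M$ for $s\ge R_*$, at any $x$ with $r(x)>R_*$ in the plane $x_3=0$,
$$\frac{1}{|\cdot|}*\rho(x)-\kappa^2\int_{r(x)}^\infty L(m_\rho(s))s^{-3}\,ds+\lb \;\ge\; \frac{M}{|x|+R_*}-\frac{\kappa_0^2 L(M)}{2|x|^2}+\lb,$$
where $\kappa_0=\sup_{\K^*}|\kappa|$. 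The $1/|x|$ term dominates the $1/|x|^2$ term at large $|x|$, so for $\lb_n$ close enough to $0$ the right side is positive at some $x\notin\mathrm{supp}(\rho_n)$, contradicting $\F_1=0$. Hence (a) holds; pointwise and spatial boundedness of $\rho$ upgrade to a uniform $\C_s$-bound, so $|\kappa|+|\lb|\to\infty$. If $|\kappa|$ stayed bounded, then $\lb_n\to-\infty$ would force the bracket in \eqref{F1L} to be negative everywhere and $\rho_n\equiv 0$, violating the mass constraint. Thus $\kappa_n\to\infty$ along a sequence.

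The heart of the argument is contradicting $\kappa_n\to\infty$. Fix $y_0$ with $r(y_0)>R_*$, so $\rho_n(y_0)=0$, and choose $x_n$ realizing $r_n^{\min}:=\inf\{r(x):\rho_n(x)>0\}$. Evaluating the bracket of \eqref{F1L} at $x_n$ (where it is $\ge 0$) and at $y_0$ (where it is $\le 0$) and subtracting, the $\lb_n$ cancels; using the uniform bound on $(1/|\cdot|)*\rho_n$ coming from $\rho_n\in L^\infty$ with support in $B_{R_*}$,
$$\kappa_n^2\int_{r(x_n)}^{r(y_0)} L(m_{\rho_n}(s))\,s^{-3}\,ds \;\le\; C$$
for some $C$ independent of $n$. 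It then remains to show this radial integral has a positive lower bound independent of $n$.

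This uniform lower bound is the main obstacle, since the integrand depends on $\rho_n$ through $m_{\rho_n}$. If $L(M)>0$ it is immediate: on $[R_*,r(y_0)]$ one has $m_{\rho_n}\equiv M$, giving an integral at least $L(M)(R_*^{-2}-r(y_0)^{-2})/2>0$. In general, assuming $L\not\equiv 0$ on $[0,M]$, pick $m^*\in(0,M)$ and $\e,c>0$ with $L(m)\ge c$ for $m\in[m^*-\e,m^*+\e]$. Since $m_{\rho_n}(r_n^{\min})=0$ and $m_{\rho_n}(R_*)=M$, the intermediate value theorem yields $r_n^{\min}<s_n^-<s_n^+\le R_*$ with $m_{\rho_n}(s_n^\mp)=m^*\mp\e$. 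The axisymmetric $L^\infty$-bound on $\rho_n$ gives the Lipschitz estimate $m'_{\rho_n}(s)\le C's$ with $C'$ independent of $n$, from which both $s_n^-\ge c_0>0$ and $s_n^+-s_n^-\ge c_1>0$ follow; a short computation bounds $(s_n^-)^{-2}-(s_n^+)^{-2}$ below by a positive constant, so $\int L(m_{\rho_n}(s))s^{-3}\,ds\ge c'>0$ uniformly in $n$, the desired contradiction.
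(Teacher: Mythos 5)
Your proof is correct and follows the same architecture as the paper's: rule out alternative (b) by showing that a nearly-zero $\lb$ forces positivity of the bracket outside $B_{R_*}$; deduce (a) and hence that $\kappa_n\to\infty$ (since bounded $\kappa$ would force $\lb_n\to-\infty$ and $\rho\equiv0$); then compare the bracket at a point $x_n$ inside the support with one at $y_0$ outside, cancel $\lb_n$, and derive $\kappa_n^2\int_{r(x_n)}^{r(y_0)}L(m_{\rho_n}(s))s^{-3}\,ds\le C$.

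Where you differ from the paper is in bounding that last integral below. The paper picks $x_n$ arbitrarily in the support, notes $r(x_n)<R_*$, and uses $m_{\rho_n}\equiv M$ on $[R_*,r(y_0)]$ to get the lower bound $\frac{L(M)}{2}\bigl(R_*^{-2}-r(y_0)^{-2}\bigr)$; this is cleaner but silently requires $L(M)>0$, which the paper's stated hypotheses ($L\ge0$, $L(0)=L'(0)=0$) do not guarantee. Your version instead chooses $x_n$ (approximately, since the infimum need not be attained) to realize the minimal cylindrical radius of the support, invokes the intermediate value theorem together with the uniform Lipschitz bound $m'_{\rho_n}(s)\le C's$ coming from the $L^\infty$- and support-bounds, and produces a uniformly positive sub-interval $[s_n^-,s_n^+]$ on which $L\circ m_{\rho_n}\ge c$. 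This buys you the weaker (and genuinely necessary, else the theorem is false) hypothesis $L\not\equiv 0$ on $[0,M]$, at the cost of a more delicate choice of $x_n$ and the accompanying bookkeeping. When $L(M)>0$ the paper's shortcut is simpler and your refinement is unnecessary, but your argument closes a genuine gap in the paper's proof as written. The only nit is the phrase ``choose $x_n$ realizing $r_n^{\min}$''---strictly you should take $x_n$ with $r(x_n)$ close enough to $r_n^{\min}$ that $m_{\rho_n}(r(x_n))<m^*-\e$.
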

\begin{proof}
Arguing by contradiction, we 
suppose that $\sup_\K\|\rho\|_{L^\infty}<\infty$ and $R_*=:\sup_\K\sup\{x\in\real^3~|~\rho(x)\ne 0\}<\infty$.

Suppose also that (a) is false. Then (b) is true and $\sup_\K(\|\rho\|_s+|\kappa|+|\alpha|)<\infty$.  
We argue as in the proof of \Cref{thm: ang vel main}.  
We pick an $x$ on the $x_3=0$ plane and such that $|x|>R_*$ is sufficiently large.  
Thereby we obtain the following estimate instead of \eqref{est: lower bound 2}:
\begin{align}
&~\frac1{|\cdot|}*\rho(x) - \kappa^2 \int_{r(x)}^\infty L(m_\rho(s))s^{-3}~ds +\lb \notag\\
\ge &~\frac{M}{|x|+R_*}-\kappa_0^2\int_{r(x)}^\infty L(M)s^{-3}~ds+\lb\notag\\
\ge &~\frac{M}{|x|+R_*}-\frac{C\kappa_0^2 L(M)}{r(x)^2}+\lb\label{eq: 5.11}
\end{align}
We have used the fact that $m_\rho(s)=M$ because $r(x)>R_*$. 
Then the sum of the first two terms in \eqref{eq: 5.11} is positive. 
We now use (b) and choose a solution along $\K^*$ 
so that $\lb$ is sufficiently close to zero to make \eqref{eq: 5.11} positive. 
Hence for this solution, and this point $x$, we have  $\rho(x)>0$, contradicting the definition of $R_*$.

Thus (a) must be true. Since we assume that $\rho$ is bounded in $L^\infty$ and $R_*<\infty$, 
it follows that $\|\rho\|_s$ is also bounded. 
Suppose $|\kappa|$ is bounded.  Then $|\lb|$ must be unbounded. 
Since $\lb<0$ for solutions in $\bigcup_{N=1}^\infty \O_N^*$, it must be true that $\lb\to-\infty$ along a sequence.  
However in this case the equation $\F_1=0$ would imply that $\rho\equiv 0$ for $\lb$ sufficiently negative, 
which contradicts the mass constraint. 

It follows that $|\kappa_n|\to\infty$ and $\lb_n\to-\infty$  
along some sequence $(\rho_n,\kappa_n,\lb_n)\in\K^*$. 
Arguing as in the proof of \Cref{thm: ang vel main},  
we choose any point $y_0$ such that $r(y_0)>R_*$, and any point $x_n$ such that $\rho_n(x_n)>0$.   
So $r(x_n) < R_*$.  It follows that
\begin{align*}
0 &\ge \left[\frac{1}{ |\cdot |} * \rho_n(\cdot) 
- \kappa_n^2\int_{r(\cdot)}^\infty L(m_{\rho_n}(s))s^{-3}~ds + \lb_n\right]  (y_0) \\
&\ge \left[ \frac{1}{ |\cdot |} * \rho_n(\cdot) 
- \kappa_n^2\int_{r(\cdot)}^\infty L(m_{\rho_n}(s))s^{-3}~ds  + \lb_n\right] \bigg|_{x_n}^{y_0} \\
&\ge \kappa_n^2 \int_{r(x_n)}^{r(y_0)}L(m_{\rho_n}(s))s^{-3}~ds-C 
\ge \kappa_n^2\int_{R_*}^{r(y_0)} L(M)s^{-3}~ds-C\\
&\ge \frac{\kappa_n^2L(M)}2\left(\frac1{R_*^2}-\frac1{r^2(y_0)}\right)-C.
\end{align*}
The desired contradiction follows because $|\kappa_n|\to\infty$.
\end{proof}

\section{Comparison between Different Angular Velocity Formulations}\label{sec: form}
The rotating star problem appears in several different formulations in the literature.  
Although these formulations are not equivalent, all of them produce rotating star solutions 
to the Euler--Poisson equations under certain circumstances.  
Here we provide a comparison of the formulations in the case of prescribed angular velocity $\omega(r)$. 
The case of prescribed angular momentum per unit mass can be discussed in a similar way.  
In our discussion the density function $\rho$ is assumed to be an axisymmetric 
function on $\real^3$, $\omega(r)$ is a continuous function on $[0,\infty)$, 
and $h(s)$ is a strictly increasing continuous function from $[0,\infty)$ onto $[0,\infty)$. 
The inverse of $h$ is denoted by $h^{-1}$.
The original Euler--Poisson equation \eqref{NG} is made precise as follows.  

\begin{form}\label{form: 1}
Let $\rho$ be a non-negative function in $C_{loc}(\real^3)\cap L^1(\real^3)$.  
It is a called a rotating star solution under \Cref{form: 1} if 
there exists a real number $\alpha$ such that the equation 
\eqn\label{eq: form 1}
\frac1{|\cdot|}*\rho(x)+\int_0^{r(x)}s\omega^2(s)~ds-h(\rho(x)) + \alpha = 0   \eeqn
is valid  on the positivity set $\{x\in\real^3~|~\rho(x)>0\}$, 
\end{form}
Note that $\frac1{|\cdot|}*\rho(x)$ is defined and continuous because $\rho\in C_{loc}(\real^3)\cap L^1(\real^3)$.  
The second formulation is basically the approach taken in this paper.

\begin{form}\label{form: 2}
Let $\rho\in C_{loc}(\real^3)\cap L^1(\real^3)$.   
It is called a rotating star solution under  \Cref{form: 2} if there exists a real number $\alpha$ such that
\eqn\label{eq: form 2}
\rho(x) = h^{-1}\left(\left[\frac1{|\cdot|}*\rho(x)+\int_0^{r(x)}s\omega^2(s)~ds+\alpha\right]_+\right)
\eeqn
for all $x\in\real^3$.
\end{form}

The third formulation is basically the one used by Auchmuty in \cite{auchmuty1991global} and is closely related to the one used by Jang and Makino in \cite{jang2017slowly}.
\begin{form}\label{form: 3}
Let $\rho\in C(\overline{B_R})$ for some ball $B_R$ of radius $R$ centered at the origin.  
Extend it to be zero outside $\overline{B_R}$.  
Then $\rho$ is called a rotating star solution under  \Cref{form: 3} if there exists a real number $\alpha$ such that
\eqref{eq: form 2} is true  
for all $x\in \overline{B_R}$.
\end{form}

The fourth formulation is used by the authors in \cite{strauss2017steady}.  The density 
is explicitly designed to be a mass-invariant perturbation of a non-rotating solution.   
An earlier precursor of this formulation was used by Lichtenstein \cite{lichtenstein1933untersuchungen} and Heilig \cite{heilig1994lichtenstein}; however, their version did not keep the mass invariant.   

\begin{form}\label{form: 4}
Let $\rho_0$ be a radial (spherically symmetric) continuous function on $\real^3$ that is positive 
in a ball $B_{R_0}$ centered at the origin, vanishes in its complement, and solves the equation
\eqn\label{eq: form 4 radial}
\frac1{|\cdot|}*\rho_0(x)-h(\rho_0(x)) + \alpha_0 = 0 
\eeqn
for some real number $\alpha_0$ and all $x\in B_{R_0}$.   
Let $\zeta:\overline{B_{R_0}}\to \real$ be an axisymmetric continuous function vanishing at the origin 
to sufficiently high order such that
\eqn
\gz(x) = x\left(1+\frac{\zeta(x)}{|x|^2}\right)
\eeqn
is a homeomorphism from $\overline{B_{R_0}}$ to $\gz(\overline{B_{R_0}})$. Define
\eqn \label{rho zeta}
\rho_\zeta(x) = \frac{\int_{B_{R_0}}\rho_0(x)~dx}{\int_{\gz(B_{R_0})}\rho_0(\gz^{-1}(x))~dx}~\rho_0(\gz^{-1}(x))
\eeqn
for $x\in \gz(B_{R_0})$ and extend it to be zero elsewhere.
The function $\zeta$ is said to give rise to a rotating star solution $\rho_\zeta$ 
if there exists a real number $\alpha$ such that 
\eqn\label{eq: form 4}
\frac1{|\cdot|}*\rho_\zeta(x) + \int_0^{r(x)}s\omega^2(s)~ds-h(\rho_\zeta(x)) + \alpha = 0 
\eeqn
for all $x\in \gz(B_{R_0})$.
\end{form}
Note that the $L^1$ norm (mass) of $\rho_\zeta$ is designed to be the same as that of $\rho_0$.  
Moreover, if one can find a $\zeta$ that gives rise to a rotating star solution, 
one not only obtains some solution, but in fact the solution $\rho_\zeta$ is created by a simple deformation 
along \emph{radial directions} from the non-rotating one $\rho_0$.    
Thus a solution under \Cref{form: 4} reveals deeper structure about its relationship to a non-rotating star.

As alluded to earlier, the above formulations are not equivalent, at least as the definitions explicitly allow.  
We begin by stating how the other formulations imply \Cref{form: 1}.
\begin{prop}
The following implications hold.
\begin{enumerate}[(a)]
\item \Cref{form: 2} implies \Cref{form: 1}.
\item \Cref{form: 3}, together with the condition $\rho(x)=0$ for all $x\in \partial B_R$, implies \Cref{form: 1}.
\item \Cref{form: 4} implies \Cref{form: 1}.
\end{enumerate}
\end{prop}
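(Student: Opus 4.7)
The plan is to exploit two elementary properties of the enthalpy: $h$ is a strictly increasing continuous bijection from $[0,\infty)$ onto $[0,\infty)$ with $h(0)=0$, so $h^{-1}$ shares these properties and in particular satisfies $h^{-1}(t)>0$ iff $t>0$. In each of the three parts, the task reduces to verifying \eqref{eq: form 1} pointwise on the positivity set $\{\rho>0\}$ with the same constant $\alpha$ provided by the given formulation.

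For part (a), I would argue that at any $x$ with $\rho(x)>0$, equation \eqref{eq: form 2} together with the strict monotonicity of $h^{-1}$ forces the argument inside $[\cdot]_+$ to be strictly positive; hence the truncation is inactive and one may apply $h$ to both sides to recover \eqref{eq: form 1}. The regularity $\rho\in C_{loc}(\real^3)\cap L^1(\real^3)$, needed to make $\frac{1}{|\cdot|}*\rho$ meaningful, is built into \Cref{form: 2}.

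For part (b) the same argument applies verbatim inside $B_R$ at any $x$ with $\rho(x)>0$, while outside $\overline{B_R}$ the positivity set is empty by the extension-by-zero convention. The only extra point is to ensure $\rho\in C_{loc}(\real^3)\cap L^1(\real^3)$, which requires continuity across $\partial B_R$; this is precisely the added hypothesis $\rho|_{\partial B_R}=0$, and $L^1$ membership is automatic from compact support and continuity.

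Part (c) is almost immediate since \eqref{eq: form 4} literally is \eqref{eq: form 1} restricted to $\gz(B_{R_0})$. Because $\rho_\zeta$ is defined to vanish off $\gz(\overline{B_{R_0}})$, the positivity set is contained in $\gz(B_{R_0})$, where the equation is already given. I expect the main (minor) obstacle to be the regularity check: one must verify that the gluing of $\rho_\zeta$ and zero across $\partial\gz(B_{R_0})=\gz(\partial B_{R_0})$ yields a function in $C_{loc}(\real^3)\cap L^1(\real^3)$. This follows from $\rho_0=0$ on $\partial B_{R_0}$, carried across the homeomorphism $\gz$ and the mass-normalization scalar in \eqref{rho zeta}, so that the values of $\rho_\zeta$ at the boundary match the exterior zero extension.
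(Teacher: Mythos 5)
Your argument is correct and coincides with the paper's proof: in (a) you use that $h^{-1}(t)>0$ iff $t>0$ to drop the truncation on the positivity set, in (b) you add the observation that the boundary vanishing yields global continuity and integrability, and in (c) you note the equation is given verbatim where $\rho_\zeta>0$. The only difference is that you make explicit the continuity-of-the-gluing check in (c), which the paper dismisses as obvious; this is a fair elaboration, not a different route.
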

\begin{proof}
To prove (a), note that if $\rho$ is a rotating star solution under \Cref{form: 2}, 
then whenever $\rho(x)>0$, the term in the square bracket of \eqref{eq: form 2} must also be positive.   
Thus in that region one can ignore the + subscript (the positive part of the square bracket), 
so that  \eqref{eq: form 1} follows.  
Assertion (b) is proven in a similar way, once it is noticed that the additional assumption 
$\rho(x)=0$ on $\partial B_R$ guarantees that $\rho\in C_{loc}(\real^3)\cap L^1(\real^3)$.  
Assertion (c) is obvious.
\end{proof}

We now discuss the weaknesses of each formulation compared with the original \Cref{form: 1}. 
The drawback of \Cref{form: 2} is that it does not capture all the solutions to \Cref{form: 1}.  
The reason is that \Cref{form: 1} does not require equality of the two sides of \eqref{eq: form 2}  
when $\rho(x)=0$, whereas \Cref{form: 2} does.  
\Cref{form: 2} requires the expression $\mathcal U(x)$ in square brackets to be 
non-positive outside the support of $\rho$, but \Cref{form: 1} does not.   
Thus \Cref{form: 2} misses many solutions which are valid according to \Cref{form: 1}, 
especially if the term involving $\omega(s)$ grows  positively at infinity.  
In that case, a valid solution under \Cref{form: 1} may make $\mathcal U(x)$  
very big for large $|x|$, while the left side remains 0.   
In fact, in order to actually work with \Cref{form: 2}, one requires the right side of \eqref{eq: form 2} 
to have enough decay near infinity, which is virtually impossible  
if the term involving $\omega(s)$ were to grow near infinity.  

\Cref{form: 3} misses some solutions of \Cref{form: 1} in the same way that \Cref{form: 2} does, 
although it does avoid the difficulty at infinity by restricting to an artificially chosen ball $B_R$.   
However, it is in general difficult to prove that $\rho(x)$ vanishes on the boundary of $B_R$.   
If one chooses $B_R$ larger than the support of a non-rotating solution, one can show that 
sufficiently small perturbations of that non-rotating solution will remain zero on the boundary of $B_R$,   
However, as soon as one continues the solution branch to fast rotations, 
nonzero boundary values may appear, which would violate the physical vacuum boundary condition 
of a rotating star.  Nor are we aware of a general mechanism that can force the support to grow gradually 
until it hits the boundary of $B_R$. In principle, the only physical solutions one can get via this approach 
seem to be merely the very small perturbations of a nonrotating star.

\Cref{form: 4} has the advantage of enforcing an equation only where $\rho_\zeta(x)>0$.   
It is thus closer in spirit to \Cref{form: 1}. However, we are not aware of any evidence that large deviations 
from a non-rotating solution will still have the structure of radial deformation that appears in \Cref{form: 4}.   
\Cref{form: 4} is also significantly more complicated than the other formulations 
when it comes to the actual construction of the function $\zeta$ (see \cite{strauss2017steady}), 
especially with regard to the required compactness property, the analogue of Lemma \ref{lem: 4.4}.    

Like \Cref{form: 1}, \Cref{form: 4} does not require \eqref{eq: form 2} on the set where $\rho(x)=0$.   
Thus it is not clear that \Cref{form: 4} implies \Cref{form: 2} or \Cref{form: 3}.   
However, in the following special situation, a solution to \Cref{form: 4} does indeed solve \Cref{form: 3}.   
For a given $\rho_0$ in \Cref{form: 4}, choose the ball $B_R$ in \Cref{form: 3} to have a fixed radius $R>R_0$.   
{\it Suppose the solution $\rho_\zeta$ is sufficiently close to the radial solution $\rho_0$} in the sense that 
$\gz(\overline{B_{R_0}})\subset B_R$, and $\rho_\zeta$ and $\rho_0$ are sufficiently close 
to each other in $C(\overline{B_R})$.   
Furthermore, suppose that $\omega(r)$ is a smooth function with sufficiently small $C(\overline{B_R})$ norm.   
Heuristically, the above conditions describe a small perturbation of the nonrotating solution $\rho_0$.   
Finally,  assume the technical condition that $r\omega(r)$ is non-decreasing.  
From \eqref{rho zeta} we see that $\rho_\zeta(x)>0$ for $x\in \gz(B_{R_0})$, 
and $\rho_\zeta(x)=0$ for $x\in \overline{B_R}\setminus \gz(B_{R_0})$.   
To prove that $\rho_\zeta$ is also a solution to \Cref{form: 3}, it remains to prove that 
\eqn
f(x) := \frac1{|\cdot|}*\rho_\zeta(x)+\int_0^{r(x)}s\omega^2(s)~ds + \alpha\le 0
\eeqn
for $x\in \overline{B_R}\setminus \gz(B_{R_0})$.   
In this ``annular" region 
we have $\Delta f(x) = \Delta \int_0^{r(x)}s\omega^2(s)~ds = \frac1r(r^2\omega^2(r))'\ge 0$.   
Hence we only have to show $f(x)\le 0$ on $\gz(\partial B_{R_0}) \cup \partial B_R$.   
By \eqref{eq: form 4} and the continuity of $\rho_\zeta$, 
we obviously have $f(x)=h(\rho_\zeta)=0$ for $x\in \gz(\partial B_{R_0})$. 
It remains to prove that $f(x)\le 0$ on $\partial B_R$.  
For this purpose note that the function $f_0(x) := \frac1{|\cdot|}*\rho_0 (x) + \alpha_0$ is harmonic outside $B_{R_0}$   
and that $f_0=0$ and $f_0'<0$ on $\partial B_{R_0}$.  
It follows that $f_0(x)<0$ for $|x|>R_0$.  
Since $f(x)$ is sufficiently close to $f_0(x)$ in supremum norm  by the smallness assumptions,  
we have $f(x)<0$ on $\pa B_R$.    
This shows that $\rho_\zeta$ is also a solution to \Cref{form: 3}.   
Since the typical construction of solutions via \Cref{form: 3}  guarantees local uniqueness, 
this reasoning shows that the unique solution must have the structure detailed in \Cref{form: 4}.

If the rotating star problem is treated as a classical free boundary problem, then a 
fifth possible formulation emerges.   Let us begin with \Cref{form: 1} with a connected set $\Omega=\{\rho>0\}$.  
Let $q=h(\rho)$.  Taking the Laplacian of \eqref{NG}, the function $q$ satisfies the elliptic equation 
\eqn 
\Delta q = 4\pi h^{-1}(q) - \kappa^2\Delta j    \eeqn 
in $\Omega$ with $j$ defined by \eqref{jay}, together with the pair of boundary conditions 
\eqn \label{hodographBC}
q=0 \quad \text{ and } \quad \frac1{|\cdot|}*h^{-1}(q)  +  \kappa^2 j =  \text{ constant on } \pa\Omega.  \eeqn  
Now we use a transformation of hodograph type to convert $\Omega$ to a fixed domain.  
Using standard spherical coordinates $(s,\theta,\phi)$, we exchange independent and dependent variables by 
defining 
\eqn 
s' = 1 - q(s,\theta,\phi)\quad \text{ and } \quad  w(s',\theta,\phi)=s .  \eeqn
Then $\Omega$ goes into the unit ball while its boundary $\pa\Omega$ 
goes into the unit sphere $\pa B_1 = \{s'=1\}$.    
The first boundary condition in \eqref{hodographBC} is automatically satisfied.  The whole problem is thereby transformed into a nonlinear  
elliptic equation for $w(s',\theta,\phi)$ in the unit ball $B_1$ with a single nonlinear boundary condition.  
This is Formulation 5.  
We continue to assume axisymmetry, which means that $w$ does not depend on $\phi$.  
This formulation has the primary advantage that the domain is fixed.  
However it appears to be rather complicated to analyze because both the equation 
and the boundary condition are highly nonlinear and have variable coefficients.  
We refrain from providing the details. \bigskip

\noindent{\bf Acknowledgments.}  
YW is supported by NSF Grant  DMS-1714343.  
WS and YW also acknowledge the support of the spring 2017 semester program at ICERM (Brown U.), 
where much of this work was done.

\bibliographystyle{acm}
\bibliography{rotstarbiblio}

\end{document}